\theoremstyle{plain}
\newtheorem{theorem}{Theorem}[section]
\newtheorem{remark}{Remark}[section]
\newtheorem{proposition}{Proposition}[section]
\theoremstyle{definition}
\theoremstyle{remark}
\long\def\symbolfootnote[#1]#2{\begingroup
	\def\thefootnote{\fnsymbol{footnote}}\footnote[#1]{#2}\endgroup}
\begin{document}
	\title[Parametric anisotropic $(p,2)$-equations]
	{Constant sign and nodal solutions for \\ parametric anisotropic $(p,2)$-equations}
	\author[N.S. Papageorgiou, D.D. Repov\v{s} and C. Vetro]{N. S. Papageorgiou, D. D. Repov\v{s} and C. Vetro}
	\address[N.S. Papageorgiou]{Department of Mathematics, National Technical University, Zografou campus, 15780, Athens, Greece}
	\email{npapg@math.ntua.gr}
	\address[D. D. Repov\v{s}]{Faculty of Education and Faculty of Mathematics and Physics,
		University of Ljubljana \& Institute of Mathematics, Physics and Mechanics,
		SI-1000, Ljubljana, Slovenia}
	\email{dusan.repovs@guest.arnes.si}
	\address[C. Vetro]{Department of Mathematics and Computer Science, University of Palermo, Via Archirafi 34, 90123, Palermo, Italy}
	\email{calogero.vetro@unipa.it}	
	\thanks{{\em Mathematics Subject Classification (2020):} Primary: 35J20, 35J60, 35J92; Secondary: 47J15, 58E05.}	
	\keywords{Anisotropic operators, regularity theory, maximum principle, constant sign and nodal solutions, critical groups, variable exponent, electrorheological fluids.}
	\maketitle

\begin{abstract}
We consider an anisotropic $(p,2)$-equation, with a parametric and superlinear reaction term. We show that for all small values of the parameter the problem has at least five nontrivial smooth solutions, four with constant sign and the fifth nodal (sign-changing). 
The proofs use tools from critical point theory, truncation and comparison techniques, and critical groups.
\end{abstract}

\section{Introduction}\label{sec:1}

Let $\Omega \subseteq \mathbb{R}^N$ be a bounded domain with a $C^2$-boundary $\partial \Omega$. In this paper we study the following anisotropic $(p,2)$-equation
\begin{equation}\label{eq0}\tag{$P_\lambda$} 
\begin{cases}-\Delta_{p(z)} u(z)-\Delta u(z)=\lambda f(z,u(z))  & \mbox{in } \Omega,\\ u \Big|_{\partial \Omega} =0, \, \lambda>0.& \end{cases}
\end{equation}
In this problem the exponent $p: \overline{\Omega} \to (1,+\infty)$ is Lipschitz continuous and $2< p_-=\min\limits_{\overline{\Omega}}p$. By    $\Delta_{p(z)}$ we denote the variable exponent (anisotropic) $p$-Laplacian, defined by
$$\Delta_{p(z)} u= \mbox{div }(|\nabla u|^{p(z)-2}\nabla u) \quad \mbox{ for all } u \in W_0^{1,p(z)}(\Omega).$$

The reaction of the problem is parametric, with $\lambda>0$ being the parameter. The function $f(z,x)$ is  measurable in $z \in \Omega$, continuous in $x \in \mathbb{R}$. We assume that $f(z,\cdot)$ is $(p_+-1)$-superlinear as $x \to \pm \infty$ ($p_+=\max\limits_{\overline{\Omega}}p$) but without satisfying the usual in such cases Ambrosetti-Rabinowitz condition (the AR-condition for short). Our goal is to prove a multiplicity theorem for problem \eqref{eq0} providing sign information for all the solutions produced. Using variational tools from the critical point theory, together with suitable truncation and comparison techniques and also Morse Theory (critical groups), we show that for all small values of the parameter $\lambda>0$ the problem has at least five nontrivial smooth solutions (four of constant sign and the fifth nodal (sign-changing)).

\begin{theorem}\label{theo11}
If hypotheses $H_0$, $H_1$ hold, then there exists $\lambda^\ast>0$ such that for all $\lambda \in (0,\lambda^\ast)$ problem \eqref{eq0} has at least five nontrivial smooth solutions $u_0, \widehat{u}\in {\rm int \,}C_+$, $v_0, \widehat{v}\in -{\rm int \,}C_+$, $y_0 \in  C_0^1(\overline{\Omega})$ nodal. 
\end{theorem}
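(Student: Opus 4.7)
The plan is to follow the classical five-solution blueprint for superlinear problems. Denote by $\varphi_\lambda \in C^1(W_0^{1,p(z)}(\Omega))$ the energy functional of \eqref{eq0}, and introduce the one-sided truncations $\varphi_\lambda^{\pm}$ obtained by replacing the reaction with $\lambda f(z,\pm x^\pm)$. Since the AR-condition fails, I would first establish the Cerami condition for $\varphi_\lambda$ and $\varphi_\lambda^\pm$ by contradiction: given an unbounded Cerami sequence $\{u_n\}$, normalize to $y_n=u_n/\|u_n\|$; using the $(p_+-1)$-superlinearity together with the quasi-monotonicity of $x\mapsto f(z,x)/|x|^{p_+-1}$ typically contained in $H_1$, one rules out both $y\neq0$ and $y=0$, forcing boundedness and hence a strongly convergent subsequence via the $(S)_+$-property of the anisotropic operator.

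For $\lambda>0$ small, a direct estimate based on the Poincar\'e inequality and the subcritical growth shows that the origin is a strict local minimizer of $\varphi_\lambda^{+}$ in the $W_0^{1,p(z)}(\Omega)$-topology. Invoking the variable-exponent analogue of the Brezis-Nirenberg lemma (equivalence of local minimizers in the $C_0^1(\overline\Omega)$- and Sobolev topologies, valid here thanks to the Fan-Zhao regularity for anisotropic problems) I would produce a local minimizer $u_0$; its membership in $\mathrm{int}\,C_+$ follows from the anisotropic strong maximum principle applied to a comparison auxiliary problem, and the minimizer is nontrivial because for $\lambda$ small the associated sub-solution sits inside the $W_0^{1,p(z)}(\Omega)$-ball where $\varphi_\lambda^+$ is bounded below by a strictly positive constant. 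The mountain pass theorem applied to $\varphi_\lambda^+$ around $u_0$, whose bottom-less geometry along the first eigenfunction $\hat u_1(2)$ is ensured by the $(p_+-1)$-superlinearity, delivers a second positive solution $\widehat u\in\mathrm{int}\,C_+$ with $\widehat u\neq u_0$. The symmetric argument yields $v_0,\widehat v\in -\mathrm{int}\,C_+$.

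Next, a Zorn-type argument on the partially ordered set of positive solutions of \eqref{eq0}, whose $C_0^1(\overline\Omega)$-compactness is supplied by a uniform $L^\infty$-bound and the anisotropic regularity theory, produces a smallest positive solution $u_*\in\mathrm{int}\,C_+$; symmetrically one obtains a largest negative solution $v_*\in -\mathrm{int}\,C_+$. To construct the nodal solution, I truncate $f$ between $v_*$ and $u_*$ and further split into positive and negative parts, producing a $C^1$, coercive functional $\tilde\varphi_\lambda$ whose constant-sign critical points inside the order interval $[v_*,u_*]$ are exactly $0,u_*,v_*$ (this uses the extremality of $u_*,v_*$ crucially). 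Both $u_*$ and $v_*$ are local minimizers of $\tilde\varphi_\lambda$, so a standard mountain pass argument gives a third critical point $y_0\in[v_*,u_*]$ at a higher energy level, and $y_0\in C_0^1(\overline\Omega)$ solves \eqref{eq0}. To guarantee $y_0\neq0$, and hence that $y_0$ is nodal, I would compute the critical groups at the origin: because $p_->2$, the $2$-Laplacian dominates near zero, so a local linking argument in the direction of the principal eigenfunction of $-\Delta$ on $H_0^1(\Omega)$ yields $C_1(\tilde\varphi_\lambda,0)=0$, whereas the mountain pass characterization gives $C_1(\tilde\varphi_\lambda,y_0)\neq0$. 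This discrepancy forces $y_0\neq0$.

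The main obstacle I anticipate is this final critical group computation at the origin, where the interplay between the anisotropic $p(z)$-Laplacian and the linear operator must be controlled in a small $C_0^1$-neighborhood: one has to quantify precisely how the mountain pass level sits with respect to the eigenvalue gap of $-\Delta$ while keeping $\lambda$ within the small range that already governed the earlier local-minimizer step. A secondary difficulty, ubiquitous in variable-exponent problems, is the failure of homogeneity of $\Delta_{p(z)}$, which prevents the usual scaling tricks and requires the comparison principle in its strong anisotropic form whenever extremal solutions and order-interval truncations are invoked.
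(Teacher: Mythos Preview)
Your proposal contains a genuine error that propagates through the argument: you assert that for small $\lambda$ ``the origin is a strict local minimizer of $\varphi_\lambda^{+}$''. This is false under hypothesis $H_1(iv)$, which forces $\lim_{x\to 0}f(z,x)/x=+\infty$; hence $F(z,x)\geq \tfrac{\eta}{2}x^2$ near $0$ for arbitrarily large $\eta$, and $\varphi_\lambda^+(t\widehat u_1(2))<0$ for all small $t>0$. The origin is therefore \emph{never} a local minimizer here, regardless of $\lambda$. The paper's mechanism is the opposite one: for $\lambda$ small there is a sphere $\|u\|=\lambda^\alpha$ on which $\varphi_\lambda^+\geq m_\lambda>0$, while inside this ball the functional dips below zero; minimizing on the closed ball then yields an \emph{interior, nontrivial} local minimizer $u_0\in\mathrm{int}\,C_+$. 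Your internal logic is also inconsistent: if $0$ were a strict local minimizer you would not obtain a distinct nontrivial local minimizer $u_0$ from the same ball.

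This misreading of $H_1(iv)$ also undermines your critical-group step. A local-linking argument in the direction of $\widehat u_1(2)$ would give $C_{d}(\tilde\varphi_\lambda,0)\neq 0$ for some $d\geq 1$, not $C_1=0$. What actually happens is that the concave behaviour at the origin (the full content of $H_1(iv)$, including the $\tau\in(1,2)$ asymptotics) makes \emph{all} critical groups at $0$ vanish, $C_k(\overline\varphi_\lambda,0)=0$ for every $k\in\mathbb N_0$; this is a cited result (Leonardi--Papageorgiou), not a linking computation. Finally, your Zorn-type construction of the smallest positive solution is incomplete: without a uniform \emph{lower} barrier you cannot prevent a decreasing chain in $S_\lambda^+$ from collapsing to $0$. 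The paper handles this by introducing an auxiliary odd problem $(Q_\lambda)$ whose unique positive solution $u_\lambda^\ast\in\mathrm{int}\,C_+$ satisfies $u_\lambda^\ast\leq u$ for every $u\in S_\lambda^+$; this barrier is what guarantees $\inf S_\lambda^+$ is nontrivial.
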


\begin{remark}
	 The hypotheses $H_0, H_1$ and spaces $C_+, C_0^1(\overline{\Omega})$  are defined in the next section.
	 We stress that the above multiplicity theorem provides sign information for all the solutions.
\end{remark}

Anisotropic equations arise in a variety of models of physical processes. We mention the works of Bahrouni-R\u{a}dulescu-Repov\v{s} \cite{Ref1} (transonic flow problems), R\.{u}\v{z}i\v{c}ka \cite{Ref19} (electrorheological and magnetorheological fluids),  Zhikov \cite{Ref24} (nonlinear elasticity theory), and Agarwal-Alghamdi-Gala-Ragusa \cite{Ref0}, Ragusa-Tachikawa \cite{RefRT} (double phase problems).  Recently there have been some existence and multiplicity results for various types of $(p,q)$-equations with nonstandard growth. We refer to the works of Gasi\'nski-Papageorgiou \cite{Ref7}, R\u{a}dulescu-Repov\v{s} \cite{Ref18}, R\u{a}dulescu  \cite{Ref17}, Papageorgiou-R\u{a}dulescu-Repov\v{s} \cite{Ref12}, Papageorgiou-Scapellato \cite{Ref14}, Papageorgiou-Vetro \cite{Ref15}, Zhang-R\u{a}dulescu \cite{Ref23}. They produce at most three nontrivial solutions, but no nodal solutions. We also mention the recent isotropic works of Li-Rong-Liang \cite{Ref10}, Papageorgiou-Vetro-Vetro \cite{Ref16} producing two positive solutions for $(p,2)$- and $(p,q)$-equations respectively,
and the recent work of Papageorgiou-Scapellato \cite{Ref00} who consider a different class of parametric equations (superlinear perturbations of the standard eigenvalue problem) and produce seven solutions, all with sign information.

\section{Mathematical Background - Hypotheses}\label{sec:2}

The analysis of problem \eqref{eq0} requires the use of Lebesgue and Sobolev spaces with variable exponents. A comprehensive treatment of such spaces can be found in the book of Diening-Hajulehto-H\"ast\"o-R\.{u}\v{z}i\v{c}ka \cite{Ref2}. 

Given $q \in C(\overline{\Omega})$, we define
$$q_-=\min\limits_{\overline{\Omega}}q \mbox{ and } q_+=\max\limits_{\overline{\Omega}}q .$$

Let $E_1=\{q \in C(\overline{\Omega}) \, : \, 1<q_-  \}$ and $M(\Omega)=\{u : \Omega \to \mathbb{R} \mbox{ measurable}\}$ (as usual we identify two measurable functions which differ only on a Lebesgue null set). Given $q \in E_1$, we define the variable exponent Lebesgue space $L^{q(z)}(\Omega)$ as follows
$$ L^{q(z)}(\Omega)=\left\{ u \in M(\Omega)\, : \, \int_\Omega |u(z)|^{q(z)}dz < \infty   \right\}.  $$

This vector space is equipped with the so-called ``Luxemburg norm'' $\|\cdot \|_{q(z)}$ defined by
$$\|u \|_{q(z)}= \inf \left[\lambda >0 \, : \,   \int_\Omega\left|\frac{u(z)}{\lambda}\right|^{q(z)}dz\leq 1  \right].$$

Then $L^{q(z)}(\Omega)$ becomes a separable, uniformly convex (hence also reflexive) Banach space.  The reflexivity of these spaces leads to the reflexivity of the corresponding Sobolev spaces, which we introduce below. In reflexive Banach spaces bounded sequences have $w$-convergent subsequences (Eberlein-\v{S}mulian theorem). We will be using this fact repeatedly.  The dual $L^{q(z)}(\Omega)^\ast$ is given by $L^{q^\prime(z)}(\Omega)$ with $q^\prime \in E_1$ defined by $q^\prime(z)=\frac{q(z)}{q(z)-1}$ for all $z \in \overline{\Omega}$ (that is, $\frac{1}{q(z)}+\frac{1}{q^\prime (z)}=1$ for all $z \in \overline{\Omega}$). Also we have the following version of H\"older's inequality
$$\int_\Omega |u(z)h(z)|dz \leq \left[\frac{1}{q_-}+\frac{1}{q^\prime_-}\right]\|u\|_{q(z)}\|h\|_{q^\prime(z)} \quad \mbox{for all $u \in L^{q(z)}(\Omega)$, all $h \in L^{q^\prime(z)}(\Omega)$. }$$

If $q_1,q_2 \in E_1$ and $q_1(z) \leq q_2(z)$ for all $z \in \overline{\Omega}$, then $L^{q_2(z)}(\Omega) \hookrightarrow L^{q_1(z)}(\Omega)$ continuously.

Now that we have variable exponent Lebesgue spaces, we can define variable exponent Sobolev spaces. So, if $q \in E_1$, then we define
$$ W^{1,q(z)}(\Omega)=\left\{ u \in L^{q(z)}(\Omega)\, : \, |\nabla u| \in L^{q(z)}(\Omega)    \right\},  $$
with $\nabla u$ being the weak gradient of $u$. This Sobolev space is equipped with the norm 
$$\|u\|_{1,q(z)}=\|u\|_{q(z)}+ \|\nabla u\|_{q(z)}\quad \mbox{for all $u \in W^{1,q(z)}(\Omega)$.}$$

When $q \in E_1$ is Lipschitz continuous (that is, $q \in E_1 \cap C^{0,1}(\overline{\Omega})$), then we define the Dirichlet anisotropic Sobolev space $ W_0^{1,q(z)}(\Omega)$ by
$$ W_0^{1,q(z)}(\Omega)=\overline{C_c^\infty(\Omega)}^{\|\cdot\|_{1,q(z)}}.$$

Both spaces $ W^{1,q(z)}(\Omega)$ and $ W_0^{1,q(z)}(\Omega)$ are separable and uniformly convex (hence reflexive) Banach spaces.

If $q \in E_1$, then we define the critical Sobolev exponent corresponding to $q(\cdot)$ by setting 
$$q^\ast(z)=\begin{cases} \dfrac{Nq(z)}{N-q(z)} & \mbox{if } q(z) <N,\\ +\infty & \mbox{if }N \leq q(z).\end{cases}$$

Suppose that $q,r \in C(\overline{\Omega})$, $1<q_-,r_+<N$ and $1 \leq r(z) \leq q^\ast(z)$ for all $z \in \overline{\Omega}$ (resp. $1 \leq r(z) < q^\ast(z)$ for all $z \in \overline{\Omega}$). Then the anisotropic Sobolev embedding theorem says that 
$$ W^{1,q(z)}(\Omega) \hookrightarrow L^{r(z)}(\Omega) \mbox{ continuously}$$
$$ (\mbox{resp. }W^{1,q(z)}(\Omega) \hookrightarrow L^{r(z)}(\Omega) \mbox{ compactly}).$$

The same embedding theorem remains true also for $W_0^{1,q(z)}(\Omega)$ provided $q \in E_1 \cap C^{0,1}(\overline{\Omega})$. Moreover, in this case the Poincar\'e inequality is true, namely we can find $\widehat{c}>0$ such that 
$$ \|u\|_{q(z)}\leq \widehat{c}\, \|\nabla u\|_{q(z)} \quad \mbox{for all $u \in W_0^{1,q(z)}(\Omega)$.}$$

This means that on the anisotropic Sobolev space $W_0^{1,q(z)}(\Omega)$ we can consider the equivalent norm 
$$\|u\|_{1,q(z)}=  \|\nabla u\|_{q(z)}\quad \mbox{for all $u \in W_0^{1,q(z)}(\Omega)$.}$$

The following modular function is very helpful in the study of the anisotropic Lebesgue and Sobolev spaces. So, let $q \in E_1$. We define
$$\rho_q(u)=\int_\Omega |u(z)|^{q(z)}dz \quad \mbox{for all $u \in L^{q(z)}(\Omega)$.}$$

For $u \in W^{1,q(z)}(\Omega)$, we define $\rho_q(\nabla u)=\rho_q(|\nabla u|)$.

The modular function $\rho_q(\cdot)$ and the Luxemburg $\|\cdot \|_{q(z)}$ are closely related.

\begin{proposition}
	\label{prop1} If $q \in E_1$ and $\{u_n,u \}_{n \in \mathbb{N}} \subseteq L^{q(z)}(\Omega)$, then 
\begin{itemize}
	\item[(a)] for all $\lambda>0$ we have
	$$\|u\|_{q(z)}=\lambda \mbox{ if and only if } \rho_q\left(\frac{u}{\lambda}\right)=1;$$ 
		\item[(b)] $\|u\|_{q(z)}<1 \Leftrightarrow  \|u\|_{q(z)}^{q_+}\leq \rho_q(u) \leq  \|u\|_{q(z)}^{q_-},$\\ \\$\|u\|_{q(z)}>1 \Leftrightarrow  \|u\|_{q(z)}^{q_-}\leq \rho_q(u) \leq  \|u\|_{q(z)}^{q_+};$\\
			\item[(c)]  $\|u_n\|_{q(z)}\to 0  \Leftrightarrow \rho_q(u_n)\to 0;$\\
			\item[(d)]  $\|u_n\|_{q(z)}\to \infty \Leftrightarrow \rho_q(u_n)\to \infty.$
\end{itemize}	
\end{proposition}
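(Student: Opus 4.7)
The whole proposition reduces to part (a), which is a fixed-point-type statement for the map $\varphi(\lambda) := \rho_q(u/\lambda) = \int_\Omega |u(z)/\lambda|^{q(z)} dz$. My plan for (a) is to establish, for fixed $u \in L^{q(z)}(\Omega)$ with $u \neq 0$, that $\varphi : (0,\infty) \to [0,\infty)$ is (i) continuous, (ii) strictly decreasing, (iii) $\varphi(\lambda) \to +\infty$ as $\lambda \to 0^+$, and (iv) $\varphi(\lambda) \to 0$ as $\lambda \to \infty$. Continuity and the limit at infinity follow from Lebesgue's dominated convergence theorem with dominant $|u(z)/\lambda_0|^{q(z)}$ on any interval $[\lambda_0,\infty)$; the blow-up at $0^+$ follows from the monotone convergence theorem applied on the set $\{u \neq 0\}$, which has positive measure. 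Strict monotonicity is pointwise: if $0<\lambda_1<\lambda_2$, then $|u(z)/\lambda_1|^{q(z)} > |u(z)/\lambda_2|^{q(z)}$ at every point where $u(z) \neq 0$. Then, by the definition
$$\|u\|_{q(z)} = \inf\{\lambda>0:\varphi(\lambda)\leq 1\},$$
continuity and the intermediate-value theorem give a (unique, by strict monotonicity) $\lambda^\ast$ with $\varphi(\lambda^\ast)=1$, and this $\lambda^\ast$ is exactly the infimum; this is the biconditional claimed in (a).

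For (b), I would set $\lambda = \|u\|_{q(z)}$ (assuming $u\neq 0$) and use (a) to write $\rho_q(u/\lambda)=1$. Factoring
$$\rho_q(u) = \int_\Omega \lambda^{q(z)}\left|\frac{u(z)}{\lambda}\right|^{q(z)} dz,$$
and using the elementary bounds $\lambda^{q_+}\leq \lambda^{q(z)}\leq \lambda^{q_-}$ when $0<\lambda<1$ (resp.\ $\lambda^{q_-}\leq \lambda^{q(z)}\leq \lambda^{q_+}$ when $\lambda>1$), the sandwich inequalities fall out immediately. The two biconditionals in (b) then follow by comparing the sign of $\|u\|_{q(z)}-1$ with that of $\rho_q(u)-1$: part (a) applied at $\lambda=1$ shows $\|u\|_{q(z)}<1 \iff \rho_q(u)<1$, and similarly for $>1$.

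Parts (c) and (d) are then direct corollaries of (b). For (c): if $\|u_n\|_{q(z)}\to 0$, then eventually $\|u_n\|_{q(z)}<1$ and the first sandwich gives $\rho_q(u_n)\leq \|u_n\|_{q(z)}^{q_-}\to 0$; conversely, if $\rho_q(u_n)\to 0$, then eventually $\rho_q(u_n)<1$, forcing $\|u_n\|_{q(z)}<1$ by the equivalence noted above, and then $\|u_n\|_{q(z)}^{q_+}\leq \rho_q(u_n)\to 0$ yields $\|u_n\|_{q(z)}\to 0$. Part (d) is perfectly symmetric, using the second line of (b) and the fact that eventually $\|u_n\|_{q(z)}>1$ (respectively $\rho_q(u_n)>1$).

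The only genuine analytical step is (a); in particular, verifying the boundary behavior of $\varphi$ and invoking the correct convergence theorem for continuity is the main obstacle, since it is the place where the non-constant exponent $q(\cdot)$ interacts nontrivially with the Lebesgue integral. Everything after (a) is algebraic bookkeeping with the two exponents $q_-$ and $q_+$.
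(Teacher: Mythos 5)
The paper does not prove Proposition~\ref{prop1}; it is quoted as standard background on variable exponent Lebesgue spaces, with the book of Diening--Harjulehto--H\"ast\"o--R\.{u}\v{z}i\v{c}ka cited for a comprehensive treatment. There is therefore no in-paper argument to compare against, and your proposal must be judged on its own merits.

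Your proof is correct and is essentially the standard textbook derivation. The reduction to part~(a) via the map $\varphi(\lambda)=\rho_q(u/\lambda)$ is the right idea, and the analytic facts you list are all valid; the one hypothesis worth making explicit is that $q\in E_1\subseteq C(\overline\Omega)$ forces $1<q_-\le q_+<\infty$, so $u\in L^{q(z)}(\Omega)$ is equivalent to $\rho_q(u)<\infty$ and the dominant $|u/\lambda_0|^{q(z)}$ is integrable for every $\lambda_0>0$, since $\rho_q(u/\lambda_0)\le \max\{\lambda_0^{-q_-},\lambda_0^{-q_+}\}\rho_q(u)$. Covering $(0,\infty)$ by intervals $[\lambda_0,\infty)$ gives continuity everywhere, and monotone convergence gives the blow-up at $0^+$ on $\{u\neq 0\}$, so the intermediate value theorem produces the unique $\lambda^\ast$ with $\varphi(\lambda^\ast)=1$; since $\varphi$ is strictly decreasing, $\{\lambda:\varphi(\lambda)\le 1\}=[\lambda^\ast,\infty)$ and the infimum is attained at $\lambda^\ast$, which is the biconditional in (a). The factoring in (b) and the deductions of (c), (d) from (b) are correct. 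One small caveat, which reflects an imprecision in the statement itself rather than in your argument: the ``$\Leftrightarrow$'' in (b) is not literal at $\|u\|_{q(z)}=1$ (where both sandwiches collapse to $\rho_q(u)=1$), nor when $q_-=q_+$; the operative content, which you correctly extract and which is what is actually used in the paper, is the pair of one-sided implications together with the sign equivalence $\|u\|_{q(z)}\lessgtr 1 \Leftrightarrow \rho_q(u)\lessgtr 1$ coming from (a) at $\lambda=1$.
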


Suppose that $q \in E_1 \cap C^{0,1}(\overline{\Omega})$. We have
$$W_0^{1,q(z)}(\Omega)^\ast=W^{-1,q^\prime(z)}(\Omega).$$

Then we introduce the operator $A_{q(z)}: W_0^{1,q(z)}(\Omega) \to  W^{-1,q^\prime(z)}(\Omega)$  defined by
$$\langle A_{q(z)}(u),h \rangle = \int_\Omega |\nabla u(z)|^{q(z)-2}(\nabla u, \nabla h)_{\mathbb{R}^N}dz \quad \mbox{for all } u,h \in W_0^{1,q(z)}(\Omega).$$
The next proposition summarizes the main properties of this operator (see Gasi\'nski-Papageorgiou \cite{Ref8}, Proposition 2.5, and R\u{a}dulescu-Repov\v{s} \cite{Ref18}, p. 40).

\begin{proposition}\label{prop2}
If $q \in E_1 \cap C^{0,1}(\overline{\Omega})$ and $A_{q(z)}: W_0^{1,q(z)}(\Omega) \to W^{-1,q^\prime(z)}(\Omega)$ is defined as above, then $A_{q(z)}(\cdot)$ is bounded $($maps bounded sets to bounded sets$)$, continuous, strictly monotone $($hence also maximal monotone$)$ and of type $(S)_+$  $($that is, if $u_n  \xrightarrow{w} u $ in $W_0^{1,q(z)}(\Omega)$ and $\limsup\limits_{n \to \infty} \langle A_{q(z)}(u_n), u_n - u \rangle \leq 0$, then $u_n \to u$ in $W_0^{1,q(z)}(\Omega)).$ 
\end{proposition}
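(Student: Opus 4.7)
The plan is to verify the four properties (boundedness, continuity, strict monotonicity, $(S)_+$) in turn, since maximal monotonicity will follow automatically from strict monotonicity plus demicontinuity.

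First, for boundedness and continuity, I would work through H\"older's inequality combined with the modular-norm equivalences of Proposition \ref{prop1}. If $\|u\|_{1,q(z)} \le M$, then $|\nabla u|^{q(z)-1}$ lies in $L^{q'(z)}(\Omega)$ with norm controlled by $\rho_q(\nabla u)$ via a routine computation, giving $|\langle A_{q(z)}(u),h\rangle| \le C(M)\|\nabla h\|_{q(z)}$. For continuity, suppose $u_n \to u$ in $W_0^{1,q(z)}(\Omega)$; passing to a subsequence, $\nabla u_n \to \nabla u$ almost everywhere and is dominated (in modular sense) by an $L^{q(z)}$ function, so by the Vitali convergence theorem applied to the modular $\rho_{q'}(|\nabla u_n|^{q(z)-2}\nabla u_n - |\nabla u|^{q(z)-2}\nabla u) \to 0$, which by Proposition \ref{prop1}(c) gives norm convergence in $W^{-1,q'(z)}(\Omega)$. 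A subsequence-of-subsequence argument promotes this to the whole sequence.

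For strict monotonicity, I would apply the classical pointwise inequality: for any $\xi \neq \eta$ in $\mathbb{R}^N$ and any real $r>1$,
\[
\bigl(|\xi|^{r-2}\xi - |\eta|^{r-2}\eta, \xi - \eta\bigr)_{\mathbb{R}^N} > 0,
\]
with $r = q(z)$. Integrating gives $\langle A_{q(z)}(u) - A_{q(z)}(v), u-v\rangle > 0$ whenever $u \neq v$. Combined with the continuity already proved, $A_{q(z)}$ is everywhere defined, monotone, and demicontinuous, hence maximal monotone (a standard Browder--Minty-type result).

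The main work lies in the $(S)_+$ property. Assume $u_n \xrightarrow{w} u$ in $W_0^{1,q(z)}(\Omega)$ and $\limsup_n \langle A_{q(z)}(u_n), u_n-u\rangle \le 0$. Monotonicity gives $\langle A_{q(z)}(u_n) - A_{q(z)}(u), u_n - u\rangle \ge 0$, while weak convergence yields $\langle A_{q(z)}(u), u_n - u\rangle \to 0$, so combined one obtains
\[
\lim_{n\to\infty} \langle A_{q(z)}(u_n) - A_{q(z)}(u), u_n - u\rangle = 0.
\]
This is the crux, because in the variable-exponent setting one cannot simply invoke uniform convexity of a fixed $L^{q}$. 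My plan is to exploit the pointwise inequality above to deduce that the nonnegative integrand $(|\nabla u_n|^{q(z)-2}\nabla u_n - |\nabla u|^{q(z)-2}\nabla u, \nabla u_n - \nabla u)_{\mathbb{R}^N}$ converges to $0$ in $L^1(\Omega)$, hence (along a subsequence) pointwise a.e., forcing $\nabla u_n \to \nabla u$ a.e. Then, splitting $\Omega$ into the regions $\{q(z) \ge 2\}$ and $\{1 < q(z) < 2\}$ and applying the standard sharpened inequalities available on each region (in conjunction with H\"older on the second), one upgrades a.e.\ convergence to $\rho_q(\nabla u_n - \nabla u) \to 0$ via Vitali/uniform integrability. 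Proposition \ref{prop1}(c) then delivers $\|u_n - u\|_{1,q(z)} \to 0$, completing the proof. The delicate point throughout is that the exponent varies with $z$, so every modular-norm passage has to be justified via Proposition \ref{prop1} rather than by a single power.
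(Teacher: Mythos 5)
The paper does not prove this proposition; it simply cites Gasi\'nski--Papageorgiou and R\u{a}dulescu--Repov\v{s} for it, so there is no in-paper argument to compare against. Your outline for boundedness, continuity, strict monotonicity, and the reduction of maximal monotonicity to monotone plus demicontinuous is standard and correct, and in the $(S)_+$ part the reduction to $\langle A_{q(z)}(u_n)-A_{q(z)}(u),u_n-u\rangle\to 0$ followed by $L^1$- and then a.e.-convergence of the nonnegative integrand (hence $\nabla u_n\to\nabla u$ a.e.\ along a subsequence) is exactly right.

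The gap is in the closing step, where you pass from $\nabla u_n\to\nabla u$ a.e.\ to $\rho_q(\nabla u_n-\nabla u)\to 0$ ``via Vitali/uniform integrability.'' Uniform integrability of $|\nabla u_n-\nabla u|^{q(z)}$ is not a free consequence of the hypotheses: boundedness of $\{u_n\}$ only gives $\{|\nabla u_n|^{q(z)}\}$ bounded in $L^1(\Omega)$, which does not imply uniform integrability, so Vitali is not directly applicable. The sharpened Simon-type inequality on $\{q\ge 2\}$ does give direct $L^1$-convergence of $|\nabla u_n-\nabla u|^{q(z)}$ there, but on $\{q<2\}$ the H\"older step you invoke runs into trouble precisely where $q(z)\uparrow 2$: the conjugate variable exponent $s(z)=2/q(z)$ has $s_-=1$ on $\{q<2\}$ whenever $\sup_{\{q<2\}}q=2$, so it is not an admissible exponent, and the Young/H\"older bound only controls the first factor while the second, $\int(|\nabla u_n|+|\nabla u|)^{q}$, stays merely bounded rather than small. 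The missing ingredient is the intermediate fact $\rho_q(\nabla u_n)\to\rho_q(\nabla u)$. This follows from the a.e.\ convergence: since $|\nabla u_n|^{q(z)-2}\nabla u_n$ is bounded in $L^{q'(z)}(\Omega;\mathbb{R}^N)$ and converges a.e.\ to $|\nabla u|^{q(z)-2}\nabla u$, it converges weakly in $L^{q'(z)}$, so $\int_\Omega |\nabla u_n|^{q(z)-2}\nabla u_n\cdot\nabla u\,dz\to\rho_q(\nabla u)$; combined with $\langle A_{q(z)}(u_n),u_n-u\rangle\to 0$ and $\nabla u_n\xrightarrow{w}\nabla u$ in $L^{q(z)}$, this yields $\rho_q(\nabla u_n)\to\rho_q(\nabla u)$. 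Once you have that, the generalized (Pratt) dominated convergence theorem with dominating sequence $g_n=2^{q_+-1}(|\nabla u_n|^{q(z)}+|\nabla u|^{q(z)})$ gives $\rho_q(\nabla u_n-\nabla u)\to 0$, and Proposition~\ref{prop1}(c) finishes the argument without any splitting of $\Omega$ and without needing a priori uniform integrability.
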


Given $x \in \mathbb{R}$, we set $x^\pm =\max \{\pm x,0\}$. Then for $u \in W_0^{1,q(z)}(\Omega)$, we define  $u^\pm(z)=u(z)^\pm$ for all $z \in \Omega$. We know that 
$$u^\pm \in W_0^{1,q(z)}(\Omega), \, u=u^+-u^-, \, |u|=u^++u^-.$$

If $u,v: \Omega \to \mathbb{R}$ are measurable functions such that $u(z) \leq v(z)$ for a.a. $z \in \Omega$, then we define $[u,v]=\{y \in W_0^{1,q(z)}(\Omega)\, : \, u(z)\leq y(z)\leq v(z) \mbox{ for a.a. }z \in \Omega \}$ and $[u)=\{y \in W_0^{1,q(z)}(\Omega)\, : \, u(z)\leq y(z) \mbox{ for a.a. }z \in \Omega \}$.

We write $u \preceq v$ if and only if for every compact $K \subseteq \Omega$, we have $0 <c_K \leq v(z)-u(z)$ for a.a. $z \in K$. Evidently, if $u,v \in C(\Omega)$ and $u(z) <v(z)$ for all $z \in \Omega$, then $u \preceq v$.

Besides the anisotropic Lebesgue and Sobolev spaces, we will also use the ordered Banach space $C_0^1(\overline{\Omega})=\{u \in C^1(\overline{\Omega}) \, : \, u \big|_{\partial \Omega}=0\}$. The positive (order) cone of $C_0^1(\overline{\Omega})$ is
$C_+=\left\{u \in C_0^1(\overline{\Omega}) \, : \, u(z) \geq 0 \mbox{ for all } z \in \overline{\Omega}\right\}$. This cone has a nonempty interior given by
$$\mbox{int }C_+=\left\{u \in C_+ \, : \, u(z) > 0 \mbox{ for all } z \in  \Omega , \quad \frac{\partial u}{\partial n} \Big|_{\partial \Omega}<0\right\},$$
with $n(\cdot)$ being the outward unit normal on $\partial \Omega$. 

Suppose $X$ is a Banach space and $\varphi \in C^1(X)$. We set
$$ K_\varphi  = \{u \in X : \varphi'(u) =0 \} \quad \mbox{(the critical set of $\varphi$).}$$

We say that $\varphi(\cdot)$ satisfies the ``$C$-condition'', if it has the following property: 
\\

``Every sequence $\{u_n\}_{n \in \mathbb{N}} \subseteq X$ such that \begin{align*}&\{\varphi(u_n)\}_{n \in \mathbb{N}} \subseteq \mathbb{R} \mbox{ is bounded, and}\\ & (1 + \|u_n\|_X) \varphi'(u_n) \to 0 \mbox{ in }X^\ast \mbox{  as } n \to \infty,\end{align*}
\hskip .6cm  admits a strongly convergent subsequence''. 
\\

Given $c \in \mathbb{R}$, we set $\varphi^c  = \{u \in X : \varphi(u) \leq c \}$.

Suppose $Y_2 \subseteq Y_1 \subseteq X$. For every $k \in \mathbb{N}_0=\mathbb{N}\cup \{0\}$, by $H_k(Y_1,Y_2)$ we denote the $k^{th}$-singular homology group with integer coefficients for the pair $(Y_1,Y_2)$. Let $u \in K_\varphi$ be isolated and $c=\varphi(u)$. Then the critical groups of $\varphi$ at $u$ are defined by
$$C_k(\varphi,u)=H_k(\varphi^c \cap U, \varphi^c \cap U \setminus \{u\}) \quad \mbox{for all } k \in \mathbb{N}_0,$$
where $U$ is an open neighborhood of $u$ such that $K_\varphi \cap \varphi^c \cap U  = \{u\}$. The excision property of singular homology implies that this definition is independent of the choice of the  isolating neighborhood $U$.

In the sequel, for economy in the notation, by $\|\cdot\|$ we will denote the norm of the Sobolev space $W_0^{1,p(z)}(\Omega)$ ($p \in E_1 \cap C^{0,1}(\overline{\Omega})$). On account of the Poincar\'e inequality mentioned earlier, we have 
$$\|u\| =  \|\nabla u\|_{p(z)}\quad \mbox{for all $u \in W_0^{1,p(z)}(\Omega)$.}$$

Now we are ready to introduce our hypotheses on the data of problem \eqref{eq0}.

\medskip
\noindent $H_0$: $p \in C^{0,1}(\overline{\Omega})$ and $2<p(z) <N$ for all $z \in \overline{\Omega}$. 
\\
\noindent $H_1$: $f:\Omega \times \mathbb{R} \to \mathbb{R}$ is a Carath\'eodory function such that $f(z,0)=0$ for a.a. $z \in \Omega$, and 
\begin{itemize}
	\item[$(i)$] $|f (z,x)| \leq a(z) [1+|x|^{r(z)-1}]$ for a.a. $z \in \Omega$, all $x \in \mathbb{R}$, with $a \in L^\infty(\Omega)$, $ r \in C(\overline{\Omega})$ with $p(z)< r(z) < p_-^\ast$ for all $z \in \overline{\Omega}$;
	\item[$(ii)$] if $F(z,x)=\int_0^x f(z,s)ds$, then
	$$\lim_{x \to \pm \infty} \frac{F(z,x)}{x^{p_+}}=+\infty \mbox{ uniformly for a.a. $z \in \Omega$;}$$
		\item[$(iii)$] 
	there exists $\mu \in C(\overline{\Omega})$ such that
	\begin{align*}&\mu(z)\in \left( (r_+-p_-)\frac{N}{p_-},p^\ast(z)\right) \quad \mbox{for all } z \in \overline{\Omega}, \\
		&0<\widehat{\eta}_0 \leq \liminf_{x \to \pm \infty} \dfrac{f(z,x)x-p_+F(z,x)}{x^{\mu(z)}}\mbox{ uniformly for a.a. $z \in \Omega$;}	\end{align*}
	\item[$(iv)$] there exists $\tau \in (1,2)$ such that
	\begin{align*} & \lim_{x \to 0} \frac{f(z,x)}{x}=+\infty \mbox{ uniformly for a.a. $z \in \Omega$,}\\
		& \lim_{x \to 0} \frac{f(z,x)}{|x|^{\tau-2}x}=0\mbox{ uniformly for a.a. $z \in \Omega$,}\\ &0 \leq \liminf_{x \to 0} \dfrac{\tau F(z,x)-f(z,x)x}{|x|^{p_+}}\mbox{ uniformly for a.a. $z \in \Omega$;}
	\end{align*}	
	
	\item[$(v)$] for every $\rho >0$, there exists $\widehat{\xi}_\rho>0$ such that for a.a. $z \in \Omega$, the function $x \to f(z,x) + \widehat{\xi}_\rho |x|^{p(z)-2}x$ is nondecreasing on $[-\rho,\rho]$ and for every $s >0$, we have $0 < m_s \leq f(z,x)x$ for a.a. $z \in \Omega$, all $|x|\geq s$. 
\end{itemize}

\begin{remark}
Hypotheses $H_1\,(ii),(iii)$ imply that for a.a. $z \in \Omega$, $f(z,\cdot)$ is $(p_+-1)$-superlinear. However, this superlinearity condition on $f(z,\cdot)$ is not formulated using the AR-condition which is common in the literature when dealing with superlinear problems (see, for example, Fan-Deng \cite{Ref3}, Theorem 1.3). 
Instead we use condition $H_1\,(iii)$ which incorporates in our framework superlinear nonlinearities with slower growth as $x \to \pm \infty$, which fail to satisfy the AR-condition. Consider for example the function
$$f(z,x)=\begin{cases}
[ |x|^{\theta-2} 
-1] x & \mbox{if }|x|\leq1,\\
|x|^{p_+-2}x \ln |x|+ [|x|^{p(z)-2}-1]x & \mbox{if }1<|x|,
\end{cases}$$
with $\theta \in (1,2)$. This function satisfies hypotheses $H_1$, but fails to satisfy the AR-condition. Hypothesis $H_1\,(iv)$ implies the presence of a concave term near zero.
\end{remark}

\section{Constant Sign Solutions - Multiplicity}\label{sec:3}

In this section, we show that for $\lambda>0$ small, problem \eqref{eq0} has solutions of constant sign (positive and negative solutions). First we look for positive solutions. To this end, we introduce the $C^1$-functional $\varphi_\lambda^+ : W_0^{1,p(z)}(\Omega) \to \mathbb{R}$ defined by 
$$ \varphi_\lambda^+(u)= \int_\Omega \frac{1}{p(z)}|\nabla u(z)|^{p(z)}dz + \frac{1}{2}\|\nabla u\|_2^2 - \lambda \int_\Omega F(z,u^+)dz \quad \mbox{for all } u \in W^{1,p(z)}_0(\Omega).$$

Working with $ \varphi_\lambda^+(\cdot)$, we can produce multiple positive smooth solutions when $\lambda>0$ is small.

\begin{proposition}
	\label{prop3} If hypotheses $H_0$, $H_1$ hold, then there exists $\lambda_+>0$ such that for all $\lambda \in (0,\lambda_+)$ problem \eqref{eq0} has at least two positive solutions $u_0, \widehat{u} \in {\rm int \,}C_+$, $u_0 \neq \widehat{u}$.
\end{proposition}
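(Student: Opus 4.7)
The plan is to produce two nontrivial critical points of $\varphi_\lambda^+$: one, $u_0$, as a local minimizer inside a small ball, and the second, $\widehat u$, via the mountain pass theorem (MPT), using the surrounding sphere as the separating set. A preliminary remark is that any nontrivial critical point $u$ of $\varphi_\lambda^+$ is automatically nonnegative: testing $(\varphi_\lambda^+)'(u)=0$ with $-u^-$ kills the reaction term (because $u^+\equiv 0$ on $\{u<0\}$ and $f(z,0)=0$), leaving $\rho_p(\nabla u^-)+\|\nabla u^-\|_2^2=0$, so $u^-\equiv 0$. Standard nonlinear regularity for variable-exponent $(p(z),2)$-equations then gives $u\in C_0^1(\overline\Omega)$, and the monotonicity in $H_1(v)$ combined with the strong maximum principle places $u\in\mathrm{int}\,C_+$.

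For the first solution, combining $H_1(i)$ with the second bound in $H_1(iv)$ produces a global estimate $|F(z,x)|\leq c_1|x|^\tau+c_2|x|^{r(z)}$. Via Proposition~\ref{prop1} and the anisotropic Sobolev embedding, for $\|u\|=\rho\leq 1$ one obtains
$$\varphi_\lambda^+(u)\geq \tfrac{1}{p_+}\,\rho^{p_+}-\lambda\,c_3\,\rho^\tau-\lambda\,c_4\,\rho^{r_-},$$
which is strictly positive once $\lambda<\lambda_+:=\lambda_+(\rho)$ with $\rho$ small, since $\tau<p_+$. Conversely, fix $v_0\in\mathrm{int}\,C_+$ with $\|v_0\|=1$; the first bound in $H_1(iv)$ implies $F(z,x)/x^2\to+\infty$ as $x\to 0^+$, and together with $p_->2$ a Fatou argument gives $\varphi_\lambda^+(tv_0)/t^2\to-\infty$ as $t\to 0^+$. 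Hence $\inf_{\overline B_\rho}\varphi_\lambda^+<0$. By weak sequential lower semicontinuity of $\varphi_\lambda^+$ on the reflexive space $W_0^{1,p(z)}(\Omega)$, this infimum is attained at some $u_0\in B_\rho\setminus\{0\}$, which is therefore a local minimizer; the preliminary remark puts $u_0\in\mathrm{int}\,C_+$.

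The mountain pass geometry for the second solution is essentially already in place: $\varphi_\lambda^+(u_0)<0<\inf_{\|u\|=\rho}\varphi_\lambda^+$, while the $(p_+-1)$-superlinearity in $H_1(ii)$ combined with Fatou on $F(z,tw)/t^{p_+}$ sends $\varphi_\lambda^+(tw)\to-\infty$ as $t\to+\infty$ for any fixed $w\in\mathrm{int}\,C_+$. Choosing $e=t_*w$ with $\|e\|>\rho$ and $\varphi_\lambda^+(e)<0$, every continuous path from $u_0$ to $e$ crosses $\{\|u\|=\rho\}$, so the MP level satisfies $c\geq\inf_{\|u\|=\rho}\varphi_\lambda^+>0$.

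The main obstacle is verifying the Cerami $(C)$-condition for $\varphi_\lambda^+$, since the Ambrosetti-Rabinowitz condition is not assumed; this is precisely the role of $H_1(iii)$. Given a Cerami sequence $\{u_n\}$, testing $(\varphi_\lambda^+)'(u_n)$ with $-u_n^-$ bounds $\{u_n^-\}$. To bound $\{u_n^+\}$, I would evaluate $p_+\varphi_\lambda^+(u_n)-\langle(\varphi_\lambda^+)'(u_n),u_n\rangle$: the principal part contributes the nonnegative expression $\int(p_+/p(z)-1)|\nabla u_n|^{p(z)}\,dz+(p_+/2-1)\|\nabla u_n\|_2^2$, while the remainder $\lambda\int[f(z,u_n^+)u_n^+-p_+F(z,u_n^+)]\,dz$ is bounded below by $\lambda\widehat\eta_0\int(u_n^+)^{\mu(z)}\,dz-C$ thanks to $H_1(iii)$, yielding a bound on $\rho_\mu(u_n^+)$. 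Interpolation between $L^{\mu(z)}$ and $L^{p^\ast(z)}$, the anisotropic Sobolev embedding, and the sharp lower bound $\mu(z)>(r_+-p_-)N/p_-$ then let one dominate $\int f(z,u_n^+)u_n^+\,dz$ by a sub-$p_-$ power of $\rho_p(\nabla u_n^+)+\|\nabla u_n^+\|_2^2$, which can be absorbed to conclude $\|u_n\|$ bounded. Weak compactness together with the $(S)_+$-property of the sum $A_{p(z)}+(-\Delta)$, inherited from Proposition~\ref{prop2}, then upgrades this to strong convergence. MPT delivers $\widehat u\in K_{\varphi_\lambda^+}$ with $\varphi_\lambda^+(\widehat u)=c>0>\varphi_\lambda^+(u_0)$, forcing $\widehat u\neq u_0$ and $\widehat u\neq 0$; the preliminary remark places $\widehat u\in\mathrm{int}\,C_+$.
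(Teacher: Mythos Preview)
Your argument is correct, and the route you take for the second solution is genuinely different from the paper's.

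For $u_0$ both proofs do essentially the same thing: show that $\varphi_\lambda^+$ is strictly positive on a small sphere for small $\lambda$, negative somewhere strictly inside, and minimize over the closed ball. The paper parametrizes the radius as $\lambda^\alpha$ with $\alpha\in(0,\tfrac{1}{p_+-\tau})$, whereas you fix $\rho$ first and then pick $\lambda_+(\rho)$; either works.

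For $\widehat u$ the paper does \emph{not} apply the mountain pass theorem to $\varphi_\lambda^+$ directly. Instead it picks $0<\gamma<\lambda$, produces a positive solution $u_\gamma\in\mathrm{int}\,C_+$ of $(P_\gamma)$ with $u_\gamma\le u_0$ and in fact $u_0-u_\gamma\in\mathrm{int}\,C_+$, and then truncates $f(z,\cdot)$ from below at $u_\gamma$ to build an auxiliary functional $\widehat{\varphi}_\lambda^+$. Since $\varphi_\lambda^+$ and $\widehat{\varphi}_\lambda^+$ differ by a constant on $[u_\gamma)$ and $u_0-u_\gamma\in\mathrm{int}\,C_+$, the local minimality of $u_0$ transfers to $\widehat{\varphi}_\lambda^+$ via the $C_0^1$--versus--Sobolev minimizer principle. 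The mountain pass theorem is then applied to $\widehat{\varphi}_\lambda^+$, whose $C$-condition is obtained by citing Gasi\'nski--Papageorgiou. The payoff of this detour is that $K_{\widehat{\varphi}_\lambda^+}\subseteq [u_\gamma)\cap\mathrm{int}\,C_+$ automatically, so positivity of $\widehat u$ is built in.

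Your approach bypasses the auxiliary problem $(P_\gamma)$ and the truncation entirely: you apply the mountain pass theorem directly to $\varphi_\lambda^+$, verify the $C$-condition by hand via the $p_+\varphi_\lambda^+(u_n)-\langle(\varphi_\lambda^+)'(u_n),u_n\rangle$ device together with $H_1(iii)$ and the interpolation afforded by the lower bound on $\mu(z)$, and rely on your preliminary remark (test with $-u^-$) plus regularity and the anisotropic maximum principle to place $\widehat u\in\mathrm{int}\,C_+$. This is shorter and self-contained; the paper's version trades that economy for the ability to quote the $C$-condition from the literature and to get $\widehat u\ge u_\gamma$ for free (a piece of information that is, however, not used in the statement of the proposition).
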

\begin{proof}
	On account of hypotheses $H_1 \, (i),(iv)$, we have 
	\begin{equation}
	\label{eq1}F(z,x)\leq c_1[|x|^\tau+|x|^\theta]\quad \mbox{for a.a. $z \in \Omega$, all $x \in \mathbb{R}$, with $c_1>0$, $p_+<\theta <p^\ast_-$.}
	\end{equation}
	
	Then for every $u \in W_0^{1,p(z)}(\Omega)$, we have
	$$ \varphi_\lambda^+(u)\geq \frac{1}{p_+}\rho_p(\nabla u)-\lambda c_1 [\|u\|_\tau^\tau+\|u\|_\theta^\theta]\quad \mbox{(see \eqref{eq1}).}$$
	
	If $\|u\|\leq 1$, then by Proposition \ref{prop1} and the Poincar\'e inequality, we have $\rho_p(\nabla u)\geq \|u\|^{p_+}$. Also recall that $W^{1,p(z)}_0(\Omega) \hookrightarrow L^\tau(\Omega)$ and $W^{1,p(z)}_0(\Omega)\hookrightarrow L^\theta(\Omega)$ continuously. Therefore for $u \in W^{1,p(z)}_0(\Omega)$ with $\|u\|\leq 1$, we have
		\begin{equation}
	\label{eq2} \varphi_\lambda^+(u)\geq \frac{1}{p_+}\|u\|^{p_+}-\lambda c_2 [\|u\|^\tau+\|u\|^\theta]\quad \mbox{for some $c_2>0$.}	\end{equation}
	Let $\alpha \in \left(0, \frac{1}{p_+ -\tau}\right)$ and consider $\|u\|=\lambda^\alpha$ with $0<\lambda \leq 1$. Then from \eqref{eq2} we have
	\begin{align}
\nonumber \varphi_\lambda^+(u)	& \geq \frac{1}{p_+}\lambda^{\alpha p_+}- c_2 [\lambda^{1+\alpha \tau}+\lambda^{1+\alpha \theta}]\\ \label{eq3} & = \left[\frac{1}{p_+}-c_2 \left( \lambda^{1-\alpha (p_+-\tau)}+\lambda^{1+\alpha (\theta-p_+)}  \right)\right] \lambda^{\alpha p_+}.
	\end{align}
	
	The choice of $\alpha>0$ and since $\theta >p_+$, imply that
	$$\xi(\lambda)=c_2 \left[\lambda^{1-\alpha (p_+-\tau)}+\lambda^{1+\alpha (\theta-p_+)} \right]\to 0^+ \quad \mbox{as $\lambda \to 0^+$.}$$
	
Hence we can find $\lambda_+ \in (0,1]$ such that
		$$\xi(\lambda)<\frac{1}{p_+} \quad \mbox{for all $0<\lambda < \lambda_+$}.$$
		
Then from \eqref{eq3} we see that
\begin{equation}
\label{eq4} \varphi_\lambda^+(u)\geq m_\lambda>0 \quad \mbox{for all $\|u\|=\lambda^\alpha$, all $\lambda \in (0,\lambda_+)$.}
\end{equation}		

Let $\widehat{\lambda}_1(2)>0$ denote the principal eigenvalue of the Dirichlet Laplacian and $\widehat{u}_1(2)$ the corresponding positive, $L^2$-normalized (that is, $\|\widehat{u}_1(2)\|_2=1$) eigenfunction. We know that $\widehat{u}_1(2) \in {\rm int \, } C_+$ (see for example, Gasi\'nski-Papageorgiou \cite{Ref6}, p. 739). On account of hypothesis $H_1\, (iv)$, given $\eta>\frac{\widehat{\lambda}_1(2)}{\lambda}$, we can find $\delta >0$ such that 
\begin{equation}
\label{eq5}F(z,x)\geq \frac{\eta}{2}x^2 \quad \mbox{for a.a. $z \in \Omega$, all $|x| \leq \delta$.}
\end{equation}

Since $\widehat{u}_1(2) \in {\rm int \, } C_+$, we can find $t \in (0,1)$ small such that $0 \leq t \widehat{u}_1(2)(z)\leq \delta$ for all $z \in \overline{\Omega}$. Then
\begin{align}
\label{eq6}\varphi_\lambda^+(t \widehat{u}_1(2)) & \leq \frac{t^{p_-}}{p_-}\rho_p(\nabla  \widehat{u}_1(2))+\frac{t^2}{2}[ \widehat{\lambda}_1(2)-\lambda \eta]\\ \nonumber & \hskip 1cm \mbox{(see \eqref{eq5} and recall that $\| \widehat{u}_1(2)\|_2=1$).}
\end{align}

Note that 
$$\int_\Omega [\lambda \eta-\widehat{\lambda}_1(2)] \widehat{u}_1(2)^2dz>0.$$

Therefore from \eqref{eq6}, we have
$$ \varphi_\lambda^+(t \widehat{u}_1(2))\leq c_3t^{p_-}-c_4t^2 \quad \mbox{for some $c_3,c_4>0$.}$$

Since $2<p_-$ (see hypothesis $H_0$), choosing $t \in (0,1)$ even smaller if necessary, we have
\begin{equation}
\label{eq7}\varphi_\lambda^+(t \widehat{u}_1(2))<0 \mbox{ and } \|t \widehat{u}_1(2)\| \leq \lambda^\alpha.
\end{equation}

Using the anisotropic Sobolev embedding theorem (see Section \ref{sec:2}), we infer that $\varphi_\lambda^+(\cdot)$ is sequentially weakly lower semicontinuous. The ball $\overline{B}_{\lambda^\alpha}=\{u \in W_0^{1,p(z)}(\Omega)\, : \, \|u\|\leq \lambda^\alpha \}$ is sequentially weakly compact (recall that $W_0^{1,p(z)}(\Omega)$ is a reflexive Banach space and use the Eberlein-\v{S}mulian theorem). So, by the Weierstrass-Tonelli theorem, we can find $u_0 \in \overline{B}_{\lambda^\alpha}$ such that
\begin{equation}
\label{eq8} \varphi_\lambda^+(u_0)=\min \left[\varphi_\lambda^+(u) \, :\, u \in \overline{B}_{\lambda^\alpha}\right].
\end{equation}

From \eqref{eq7} and \eqref{eq8}, it follows that
\begin{align*}
& \varphi_\lambda^+(u_0)<0=\varphi_\lambda^+(0), \\ \Rightarrow \quad & u_0 \neq 0.
\end{align*}
Moreover, from \eqref{eq4} and \eqref{eq8}, we infer that
\begin{equation}
\label{eq9} 0<\|u_0\| < \lambda^\alpha.
\end{equation}

  From \eqref{eq9} we see that  $u_0$ is an interior point in $\overline{B}_{\lambda^\alpha}$ and a minimizer of $\varphi_\lambda^+$. Hence 
\begin{align}
\nonumber & (\varphi_\lambda^+)^\prime (u_0)=0,\\ 
\Rightarrow \quad \label{eq10} & \langle A_{p(z)}(u_0),h\rangle +\langle A_{2}(u_0),h\rangle = \lambda \int_\Omega f(z,u_0^+)h dz \quad \mbox{for all $h \in W_0^{1,p(z)}(\Omega)$.}
\end{align}

In \eqref{eq10} we choose $h=-u_0^- \in W_0^{1,p(z)}(\Omega)$ and obtain
\begin{align*}
& \rho_p(\nabla u_0^-)+\|\nabla u_0^-\|_2^2=0, \\ \Rightarrow \quad & u_0 \geq 0, \, u_0 \neq 0.
\end{align*}

From \eqref{eq10} we have that $u_0$ is a positive solution of problem \eqref{eq0} with $0<\lambda < \lambda_+$. From Fan-Zhao \cite[Theorem 4.1]{Ref4} (see also Gasi\'nski-Papageorgiou \cite[Proposition 3.1]{Ref7}), we have that $u_0 \in L^\infty(\Omega)$. Then from Tan-Fang \cite[Corollary 3.1]{Ref21} (see also Fukagai-Narukawa \cite[Lemma 3.3]{Ref5}), we have that $u_0 \in C_+ \setminus \{0\}$. Finally, the anisotropic maximum principle of Zhang \cite{Ref22} implies that $u_0 \in {\rm int \, }C_+$.

Now let $\lambda \in (0,\lambda_+)$ and consider $0 <\gamma<\lambda$. From the previous analysis, we know that problem $(P_\gamma)$ has a positive solution $u_\gamma \in {\rm int \, }C_+$. We will show that we can have
\begin{equation}
\label{eq11}u_0-u_\gamma \in {\rm int \, }C_+.
\end{equation}

First we will show that we can have a solution $u_\gamma$ of $(P_\gamma)$ such that $u_\gamma \leq u_0$. To this end let
\begin{equation}
\label{eq12}g_+(z,x)=\begin{cases}f(z,x^+) & \mbox{if }x\leq u_0(z),\\ f(z,u_0(z)) & \mbox{if }u_0(z)<x.\end{cases}
\end{equation}

This is a Carath\'eodory function. We set $G_+(z,x)=\int_0^x g_+(z,s)ds$ and consider the $C^1$-functional $\psi^+_\gamma : W_0^{1,p(z)}(\Omega) \to \mathbb{R}$ defined by
\begin{align*}
 \psi^+_\gamma(u) & = \int_\Omega \frac{1}{p(z)}|\nabla u(z)|^{p(z)}dz + \frac{1}{2}\|\nabla u\|_2^2 - \gamma \int_\Omega G_+(z,u)dz\\ & \geq \frac{1}{p_+}\rho_p(\nabla u)+ \frac{1}{2}\|\nabla u\|_2^2 - \gamma \int_\Omega G_+(z,u)dz \quad \mbox{for all } u \in W^{1,p(z)}_0(\Omega).
 \end{align*}
 
 From Proposition \ref{prop1} and \eqref{eq12}, we see that $\psi^+_\gamma(\cdot)$ is coercive. Also, it is sequentially weakly lower semicontinuous. So, we can find $u_\gamma \in W^{1,p(z)}_0(\Omega)$ such that
 \begin{equation}
 \label{eq13} \psi^+_\gamma(u_\gamma)=\min \left[\psi^+_\gamma(u) \, :\, u \in W^{1,p(z)}_0(\Omega)\right].
 \end{equation}
 
 As before, using hypothesis $H_1 \, (iv)$ and choosing $t \in (0,1)$ small so that we also have $0 \leq t \widehat{u}_1(2)\leq u_0$ (see Papageorgiou-R\u{a}dulescu-Repov\v{s} \cite{Ref13}, Proposition 4.1.22, p. 274 and recall that $u_0 \in {\rm int \, }C_+$), we will have
 \begin{align*}
 & \psi^+_\gamma(t \widehat{u}_1(2))<0,\\ \Rightarrow \quad & \psi^+_\gamma(u_\gamma)<0=\psi^+_\gamma(0) \quad \mbox{(see \eqref{eq13}),}\\ \Rightarrow \quad & u_\gamma \neq 0.
 \end{align*}
 
 From \eqref{eq13} we have 
 \begin{align}
 \nonumber & (\psi^+_\gamma)^\prime (u_\gamma)=0,\\ 
 \Rightarrow \quad \label{eq14} & \langle A_{p(z)}(u_\gamma),h\rangle +\langle A_{2}(u_\gamma),h\rangle = \gamma \int_\Omega  g_+(z,u_\gamma)h dz \quad \mbox{for all $h \in W_0^{1,p(z)}(\Omega)$.}
 \end{align}
 
 In \eqref{eq14} we choose $h=-u_\gamma^- \in W_0^{1,p(z)}(\Omega)$ and have
 \begin{align*}
 & \rho_p(\nabla u_\gamma^-)+\|\nabla u_\gamma^-\|_2^2=0, \\ \Rightarrow \quad & u_\gamma \geq 0, \, u_\gamma \neq 0.
 \end{align*}
 
 Next in \eqref{eq14} we choose $h=(u_\gamma-u_0)^+ \in W_0^{1,p(z)}(\Omega)$. We have
 \begin{align*}
 & \langle A_{p(z)}(u_\gamma),(u_\gamma-u_0)^+\rangle +\langle A_{2}(u_\gamma),(u_\gamma-u_0)^+\rangle \\  & \leq \lambda \int_\Omega f(z,u_0)(u_\gamma-u_0)^+dz \quad \mbox{(since $\gamma < \lambda$)}\\ & = \langle A_{p(z)}(u_0),(u_\gamma-u_0)^+\rangle +\langle A_{2}(u_0),(u_\gamma-u_0)^+\rangle, \\ \Rightarrow \quad & u_\gamma \leq u_0.
 \end{align*}
 
 So, we have proved that 
 \begin{equation}
 \label{eq15} u_\gamma \in [0,u_0], \, u_\gamma \neq 0.
 \end{equation}
 
 As before, from the anisotropic regularity theory and the anisotropic maximum principle, imply that $u_\gamma \in {\rm int \, }C_+$. So, we have produced a solution $u_\gamma \in {\rm int \, }C_+$ of $(P_\gamma)$ such that $u_\gamma \leq u_0$ (see \eqref{eq15}).
 
 Now, let $\rho=\|u_0\|_\infty$ and let $\widehat{\xi}_\rho>0$ be as postulated by hypothesis $H_1\, (v)$. We have
 \begin{align}
\nonumber  & - \Delta_{p(z)}u_\gamma -\Delta u_\gamma +\gamma \widehat{\xi}_\rho u_\gamma^{p(z)-1}
\\ \nonumber =  & \, \gamma \left[f(z,u_\gamma)+  \widehat{\xi}_\rho u_\gamma^{p(z)-1}\right]
\\ \nonumber \leq   & \, \gamma \left[f(z,u_0)+  \widehat{\xi}_\rho u_0^{p(z)-1}\right]\quad \mbox{(see \eqref{eq15} and hypothesis $H_1\, (v)$)}
\\ \nonumber =   & \, \lambda  f(z,u_0)+  \gamma \widehat{\xi}_\rho u_0^{p(z)-1}- (\lambda-\gamma)f(z,u_0) \\ \label{eq16} \leq & \, - \Delta_{p(z)}u_0 -\Delta u_0 +\gamma \widehat{\xi}_\rho u_0^{p(z)-1}\quad \mbox{(since $\gamma < \lambda$).}
  \end{align}
  
  Recall that $u_0 \in {\rm int \, }C_+$. So, on account of hypothesis $H_1\, (v)$, we have 
  $$0 \preceq (\lambda-\gamma)f(\cdot, u_0(\cdot)).$$
  
  Then from \eqref{eq16} and Proposition 2.4 of Papageorgiou-R\u{a}dulescu-Repov\v{s} \cite{Ref12}, we infer that \eqref{eq11} is true. 
  
  Using $u_\gamma \in {\rm int \, }C_+$ , we introduce the following truncation of $f(z,\cdot)$
  \begin{equation}
  \label{eq17} k_+(z,x)=\begin{cases} f(z,u_\gamma(z)) & \mbox{if }x \leq u_\gamma(z),\\
  f(z,x) & \mbox{if }u_\gamma(z)<x.
  \end{cases}
  \end{equation}
  
  We set $K_+(z,x)=\int_0^x k_+(z,s)ds$ and consider the $C^1$-functional $\widehat{\varphi}^+_\lambda : W_0^{1,p(z)}(\Omega) \to \mathbb{R}$ defined by
  $$
  \widehat{\varphi}^+_\lambda(u)  = \int_\Omega \frac{1}{p(z)}|\nabla u(z)|^{p(z)}dz + \frac{1}{2}\|\nabla u\|_2^2 - \lambda \int_\Omega K_+(z,u)dz\quad \mbox{for all } u \in W^{1,p(z)}_0(\Omega).
$$

From \eqref{eq17} we see that
\begin{equation}
\label{eq18}  \varphi^+_\lambda \Big|_{[u_\gamma)}= \widehat{\varphi}^+_\lambda\Big|_{[u_\gamma)}+\widehat{\beta}_\lambda \quad \mbox{with $\widehat{\beta}_\lambda \in \mathbb{R}$.}
\end{equation}

From the first part of the proof, we know that $u_0 \in {\rm int \, }C_+$ is a local minimizer of $\varphi^+_\lambda$. Then \eqref{eq11} and \eqref{eq18} imply that 
\begin{align}
\nonumber & \mbox{$u_0$ is a local $C_0^1(\overline{\Omega})$-minimizer of $\widehat{\varphi}^+_\lambda(\cdot)$,}\\ \label{eq19} \Rightarrow \quad  & \mbox{$u_0$ is a local $W_0^{1,p(z)}(\Omega)$-minimizer of $\widehat{\varphi}^+_\lambda(\cdot)$}
\end{align}
(see Tan-Fang \cite{Ref21}, Theorem 3.2 and Gasi\'nski-Papageorgiou \cite{Ref7}, Proposition 3.3). Using \eqref{eq17}, we can easily check that 
\begin{equation}
\label{eq20} K_{\widehat{\varphi}^+_\lambda}\subseteq [u_\gamma) \cap {\rm int \, }C_+.
\end{equation}

This implies that we may assume that
\begin{equation}
\label{eq21} K_{\widehat{\varphi}^+_\lambda} \mbox{ is finite}
\end{equation}
(otherwise we already have a whole sequence of distinct positive smooth solutions of \eqref{eq0} and so we are done). Then \eqref{eq21}, \eqref{eq19} and Theorem 5.7.6, p. 449, of Papageorgiou-R\u{a}dulescu-Repov\v{s} \cite{Ref13}, imply that we can find $\rho \in (0,1)$ small such that
\begin{equation}
\label{eq22}\widehat{\varphi}^+_\lambda(u_0)< \inf \left[\widehat{\varphi}^+_\lambda(u)\, : \, \|u-u_0\|=\rho\right]=m_\lambda^+.
\end{equation}

If $u \in {\rm int \, }C_+$, then from hypothesis $H_1\, (ii)$ we have 
\begin{equation}
\label{eq23}\widehat{\varphi}^+_\lambda(tu) \to -\infty \quad \mbox{as $t \to +\infty$.}
\end{equation}

Moreover, \eqref{eq18} and Proposition 4.1 of Gasi\'nski-Papageorgiou \cite{Ref7}, implies that 
\begin{equation}
\label{eq24}\widehat{\varphi}^+_\lambda(\cdot) \mbox{ satisfies the $C$-condition (see hypothesis $H_1 \, (iii)$).}
\end{equation}

From \eqref{eq22}, \eqref{eq23} and \eqref{eq24}, we see that we can use the mountain pass theorem and obtain $\widehat{u} \in W_0^{1,p(z)}(\Omega)$ such that 
\begin{equation}
\label{eq25}\begin{cases}\widehat{u} \in K_{\widehat{\varphi}^+_\lambda}\subseteq [u_\gamma) \cap {\rm int \, }C_+ & \mbox{(see \eqref{eq20})},\\ \widehat{\varphi}^+_\lambda(u_0)<  m_\lambda^+ \leq \widehat{\varphi}^+_\lambda(\widehat{u})& \mbox{(see \eqref{eq22})}.\end{cases}
\end{equation}

From \eqref{eq25} and \eqref{eq17} it follows that $\widehat{u} \in {\rm int \,}C_+$ is a positive solution of problem \eqref{eq0} ($\lambda \in (0,\lambda_+)$), $\widehat{u} \neq u_0$. 
	\end{proof}

In a similar fashion we can generate two negative smooth solutions when $\lambda>0$ is small. In this case we start with the $C^1$-functional $\varphi_\lambda^- : W_0^{1,p(z)}(\Omega) \to \mathbb{R}$ defined by 
$$ \varphi_\lambda^-(u)= \int_\Omega \frac{1}{p(z)}|\nabla u(z)|^{p(z)}dz + \frac{1}{2}\|\nabla u\|_2^2 - \lambda \int_\Omega F(z,-u^-)dz \quad \mbox{for all } u \in W^{1,p(z)}_0(\Omega).$$

Using this functional and reasoning as in the ``positive'' case, we have the following multiplicity result.

\begin{proposition}
	\label{prop4} If hypotheses $H_0$, $H_1$ hold, then there exists $\lambda_->0$ such that for all $\lambda \in (0,\lambda_-)$ problem \eqref{eq0} has at least two  negative solutions $v_0, \widehat{v} \in -{\rm int \,}C_+$, $v_0 \neq \widehat{v}$.
\end{proposition}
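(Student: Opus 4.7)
The plan is to mirror the proof of Proposition \ref{prop3} with the obvious sign reversal throughout, replacing $u^+$ by $-u^-$, $\text{int}\,C_+$ by $-\text{int}\,C_+$, the test eigenfunction $t\widehat{u}_1(2)$ by $-t\widehat{u}_1(2)$, and reversing inequalities in the comparison step. All the ingredients used for the positive case (anisotropic regularity from Fan-Zhao and Tan-Fang, the maximum principle of Zhang, the anisotropic $C^1$ vs $W_0^{1,p(z)}$ minimizer equivalence, the Papageorgiou-R\u{a}dulescu-Repov\v{s} strong comparison principle, the mountain pass theorem, and the $C$-condition) apply symmetrically because hypotheses $H_1\,(ii),(iii),(iv),(v)$ are formulated as two-sided conditions as $x\to \pm\infty$ and at $x=0$, and $F(z,-u^-)$ depends only on the negative part.

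For the first solution, I would establish an estimate analogous to \eqref{eq4}, namely $\varphi_\lambda^-(u)\geq m_\lambda>0$ for $\|u\|=\lambda^\alpha$ and $\lambda\in(0,\lambda_-)$ small, using the growth bound $F(z,x)\leq c_1[|x|^\tau+|x|^\theta]$ which works for both signs of $x$. Then using hypothesis $H_1\,(iv)$ to test against $-t\widehat{u}_1(2)\in -\text{int}\,C_+$ with $t\in(0,1)$ chosen small so that $-t\widehat{u}_1(2)(z)\geq -\delta$ on $\overline{\Omega}$, I would obtain $\varphi_\lambda^-(-t\widehat{u}_1(2))<0$ and $\|{-t\widehat{u}_1(2)}\|\leq\lambda^\alpha$, invoking crucially $2<p_-$. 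The Weierstrass-Tonelli theorem on the closed ball $\overline{B}_{\lambda^\alpha}$ then yields a minimizer $v_0$ lying in the interior of the ball, hence a critical point of $\varphi_\lambda^-$; testing $(\varphi_\lambda^-)'(v_0)=0$ against $v_0^+$ forces $v_0\leq 0$, $v_0\neq 0$, and then the anisotropic regularity plus maximum principle give $v_0\in -\text{int}\,C_+$.

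For the second solution, I would pick $\gamma\in(0,\lambda)$ and produce a negative solution $v_\gamma\in-\text{int}\,C_+$ of $(P_\gamma)$ with $v_0\leq v_\gamma\leq 0$ by minimizing the appropriate truncated functional, where the truncation is
\[
\widetilde{g}_-(z,x)=\begin{cases} f(z,v_0(z)) & \mbox{if } x<v_0(z),\\ f(z,-x^-) & \mbox{if } x\geq v_0(z).\end{cases}
\]
The strong comparison then gives $v_\gamma - v_0\in\text{int}\,C_+$ via Proposition 2.4 of \cite{Ref12}, since $0\preceq(\lambda-\gamma)[-f(\cdot,v_0(\cdot))]$ by hypothesis $H_1\,(v)$ (note $f(z,v_0(z))\leq 0$ on $\Omega$ because of the sign of $v_0$; this uses the two-sided inequality $f(z,x)x\geq m_s>0$ for $|x|\geq s$). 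Next I would truncate $f$ from above at $v_\gamma$, introduce $\widehat{\varphi}_\lambda^-$, observe that $v_0$ is a local $C_0^1$-minimizer (and hence a local $W_0^{1,p(z)}$-minimizer by Gasi\'nski-Papageorgiou \cite{Ref7}) of $\widehat{\varphi}_\lambda^-$, verify the $C$-condition and the geometry $\widehat{\varphi}_\lambda^-(-tu)\to-\infty$ as $t\to+\infty$ for any $u\in\text{int}\,C_+$ using $H_1\,(ii)$, and apply the mountain pass theorem to produce $\widehat{v}\in -\text{int}\,C_+$ with $\widehat{v}\leq v_\gamma<v_0$ (in the strong sense), so $\widehat{v}\neq v_0$. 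Setting $\lambda_-$ accordingly, this completes the construction.

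The only genuinely non-routine step is verifying the one-sided strong comparison in the negative direction, which requires reading hypothesis $H_1\,(v)$ carefully to see that the monotonicity shift $x\mapsto f(z,x)+\widehat{\xi}_\rho|x|^{p(z)-2}x$ is a two-sided assumption on $[-\rho,\rho]$, so the computation \eqref{eq16} runs with all signs reversed; the rest is a faithful symmetric transcription of the proof of Proposition \ref{prop3}.
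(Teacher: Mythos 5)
Your proposal is essentially correct and takes precisely the approach the paper intends: the paper dismisses this proposition with ``reasoning as in the `positive' case,'' and your transcription with signs reversed is a faithful execution of that plan, including the correct direction $v_0\leq v_\gamma\leq 0$ with $v_\gamma-v_0\in{\rm int\,}C_+$ and the observation that the two-sided formulation of $H_1\,(v)$ gives $f(z,v_0(z))<0$ on compacta, so $0\preceq(\lambda-\gamma)[-f(\cdot,v_0(\cdot))]$. One small slip at the end: you write ``$\widehat{v}\leq v_\gamma<v_0$,'' but the established ordering is the reverse, $v_0\leq v_\gamma$; and the conclusion $\widehat{v}\neq v_0$ does not follow from an ordering argument (both $\widehat{v}$ and $v_0$ lie below $v_\gamma$), but rather, exactly as in \eqref{eq25}, from the mountain-pass level estimate $\widehat{\varphi}_\lambda^-(v_0)<m_\lambda^-\leq\widehat{\varphi}_\lambda^-(\widehat{v})$, where $v_0$ is a strict local minimizer of $\widehat{\varphi}_\lambda^-$ by the analogue of \eqref{eq22}. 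With that correction the argument goes through.
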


\section{Extremal Constant Sign Solutions}\label{sec:4}

Let $S_\lambda^+$ be the set of positive solutions of \eqref{eq0} and $S_\lambda^-$ be the set of negative solutions of \eqref{eq0}. We know that:
\begin{align*}
& \emptyset \neq S_\lambda^+ \subseteq {\rm int \, } C_+ \quad \mbox{for all $\lambda \in (0,\lambda_+)$ (see Proposition \ref{prop3}),}\\	
& \emptyset \neq S_\lambda^- \subseteq -{\rm int \, } C_+ \quad \mbox{for all $\lambda \in (0,\lambda_-)$ (see Proposition \ref{prop4}).}	
\end{align*}	

In this section we show that $S_\lambda^+$ has a smallest element $\overline{u}_\lambda \in {\rm int \, } C_+$, that is, $\overline{u}_\lambda \leq u$ for all $u \in S_\lambda^+$ and $S_\lambda^-$ has a biggest element $\overline{v}_\lambda \in -{\rm int \, } C_+$, that is, $v \leq \overline{v}_\lambda$ for all $v \in S_\lambda^-$. We call $\overline{u}_\lambda$ and $\overline{v}_\lambda$ the ``extremal'' constant sign solutions of \eqref{eq0}. In Section \ref{sec:5} these solutions will be used to produce a nodal (sign-changing) solution of \eqref{eq0}. Indeed, if we can produce a nontrivial solution of \eqref{eq0} in the order interval $[\overline{v}_\lambda,\overline{u}_\lambda]$ distinct from $\overline{u}_\lambda$ and $\overline{v}_\lambda$, on account of the extremality of $\overline{u}_\lambda$ and $\overline{v}_\lambda$, this solution will be nodal.

To produce the extremal constant sign solutions, we need some preparation. Let $\lambda \in (0,\lambda_+)$ and let $\eta >\frac{\widehat{\lambda}_1(2)}{\lambda}$. On account of hypotheses $H_1 \, (i),(iv)$, we can find $c_5>0$ such that
\begin{equation}
\label{eq26} f(z,x)x \geq \eta x^2-c_5|x|^{r_+} \quad \mbox{for a.a. $z \in \Omega$, all $x \in \mathbb{R}$.}
\end{equation}

This unilateral growth restriction on $f(z,\cdot)$, leads to the following auxiliary anisotropic $(p,2)$-problem
\begin{equation}\label{eq0bis}\tag{$Q_\lambda$} 
\begin{cases}-\Delta_{p(z)} u(z)-\Delta u(z)=\lambda \left[\eta u(z)-c_5|u(z)|^{r_+-2}u(z)\right]  & \mbox{in } \Omega,\\ u \Big|_{\partial \Omega} =0, \, \lambda>0, \, u>0.& \end{cases}
\end{equation}

For this problem we have the following result

\begin{proposition}
	\label{prop5} If hypotheses $H_0$ hold, then for every $\lambda>0$ problem \eqref{eq0bis} has a unique positive solution $u_\lambda^\ast \in {\rm int \, }C_+$, and since problem \eqref{eq0bis} is odd, $v_\lambda^\ast=-u_\lambda^\ast \in -{\rm int \, }C_+$ is the unique negative solution.
\end{proposition}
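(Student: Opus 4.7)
The plan is to construct $u_\lambda^\ast$ by the direct method applied to a positive-part truncated energy, and then to derive uniqueness via a Díaz-Saa type test-function argument exploiting that $x \mapsto [\eta x - c_5 x^{r_+-1}]/x = \eta - c_5 x^{r_+-2}$ is strictly decreasing on $(0,\infty)$ (since $r_+ > 2$). Oddness of the equation then delivers $v_\lambda^\ast = -u_\lambda^\ast$ at no extra cost.

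For existence, I would introduce the $C^1$-functional
\[
\psi_\lambda(u) = \int_\Omega \frac{1}{p(z)}|\nabla u|^{p(z)}\,dz + \frac{1}{2}\|\nabla u\|_2^2 - \frac{\lambda\eta}{2}\|u^+\|_2^2 + \frac{\lambda c_5}{r_+}\|u^+\|_{r_+}^{r_+}
\]
on $W_0^{1,p(z)}(\Omega)$, whose positive critical points solve \eqref{eq0bis}. Since $r_+ > p_+ > 2$ and $|\Omega| < \infty$, Young's inequality applied to $\|u^+\|_2^2 \leq c\|u^+\|_{r_+}^2$ absorbs the indefinite term into the $r_+$-norm modulo a constant, yielding coercivity; together with convexity of the gradient terms and compactness of $W_0^{1,p(z)}(\Omega)\hookrightarrow L^{r_+}(\Omega), L^2(\Omega)$, this gives sequential weak lower semicontinuity. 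Weierstrass-Tonelli then furnishes a global minimizer $u_\lambda^\ast$. To show $u_\lambda^\ast \neq 0$, I would evaluate along $t\widehat{u}_1(2)$ for small $t>0$: since $\|\widehat{u}_1(2)\|_2 = 1$, $\lambda\eta > \widehat{\lambda}_1(2)$, and both $p_-, r_+ >2$, one obtains
\[
\psi_\lambda(t\widehat{u}_1(2)) \leq \frac{t^{p_-}}{p_-}\rho_p(\nabla \widehat{u}_1(2)) + \frac{t^2}{2}[\widehat{\lambda}_1(2)-\lambda\eta] + \frac{\lambda c_5}{r_+}t^{r_+}\|\widehat{u}_1(2)\|_{r_+}^{r_+} < 0
\]
for $t$ small, forcing $\psi_\lambda(u_\lambda^\ast) < 0$. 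Testing $\psi_\lambda'(u_\lambda^\ast)=0$ against $-(u_\lambda^\ast)^-$ gives $u_\lambda^\ast \geq 0$; the anisotropic $L^\infty$-bound of Fan-Zhao, the $C^{0,\alpha}$-regularity of Tan-Fang, and Zhang's anisotropic maximum principle (invoked exactly as in Proposition \ref{prop3}) then place $u_\lambda^\ast \in {\rm int \,}C_+$.

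For uniqueness, I would take two positive solutions $u_1, u_2 \in {\rm int \,}C_+$ and, using that $(u_1^2-u_2^2)/u_i \in W_0^{1,p(z)}(\Omega)\cap L^\infty(\Omega)$ is admissible, test the weak form of \eqref{eq0bis} for $u_i$ against $(u_1^2-u_2^2)/u_i$ ($i=1,2$) and subtract. The reaction contributions collapse to
\[
-\lambda c_5 \int_\Omega (u_1^{r_+-2} - u_2^{r_+-2})(u_1^2 - u_2^2)\,dz \leq 0
\]
by monotonicity of $t\mapsto t^{r_+-2}$. A pointwise Picone-type identity (classical for the Laplacian, in its Allegretto-Huang form for the $p(z)$-Laplacian) makes the left-hand integrand nonnegative, with equality only when $u_1$ and $u_2$ are proportional. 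Both sides thus vanish, and insertion of this proportionality into \eqref{eq0bis} forces $u_1 \equiv u_2$. Oddness of the equation finally gives $v_\lambda^\ast = -u_\lambda^\ast \in -{\rm int \,}C_+$ as the unique negative solution.

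The main obstacle I anticipate is the Picone-type pointwise inequality for the variable-exponent $p(z)$-Laplacian with the quadratic test function $(u_1^2-u_2^2)/u_i$; the standard $p(z)$-Picone identity uses $(u_1^{p(z)}-u_2^{p(z)})/u_i^{p(z)-1}$, so here one must exploit the help of the classical Picone contribution of the $-\Delta$ part to tame cross terms, and argue pointwise with care because $p$ varies with $z$.
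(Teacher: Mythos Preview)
Your existence argument matches the paper's. For uniqueness, both you and the paper pursue the D\'iaz--Saa strategy with the test direction $h=u_1^2-u_2^2$, but the paper circumvents exactly the obstacle you anticipate: rather than attempting a pointwise Picone computation for the $p(z)$-Laplacian with quadratic test functions, it invokes Theorem~2.2 of Tak\'a\v{c}--Giacomoni \cite{Ref20}, which asserts convexity of
\[
j(w)=\int_\Omega \frac{1}{p(z)}\bigl|\nabla w^{1/2}\bigr|^{p(z)}\,dz+\frac{1}{2}\bigl\|\nabla w^{1/2}\bigr\|_2^2
\]
on $\{w\ge 0:\ w^{1/2}\in W_0^{1,p(z)}(\Omega)\}$. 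One then computes $j'(u_i^2)(h)$ via Green's identity and uses the monotonicity of $j'$ to obtain directly
\[
0\le c_5\int_\Omega \bigl(u_2^{r_+-2}-u_1^{r_+-2}\bigr)(u_1^2-u_2^2)\,dz\le 0,
\]
forcing $u_1=u_2$. This packages the Picone-type positivity you need (for both the $-\Delta$ and the $-\Delta_{p(z)}$ contributions simultaneously) into a single convexity statement, and entirely avoids the delicate pointwise variable-exponent analysis you correctly flagged as the main difficulty.
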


\begin{proof}
Consider the $C^1$-functional $\sigma^+_\lambda : W_0^{1,p(z)}(\Omega) \to \mathbb{R}$ defined by
	\begin{align*}
		\sigma^+_\lambda(u) & = \int_\Omega \frac{1}{p(z)}|\nabla u(z)|^{p(z)}dz + \frac{1}{2}\|\nabla u\|_2^2 + \frac{\lambda c_5}{r_+}\|u^+\|_{r_+}^{r_+}-\frac{\lambda}{2}\eta\|u^+\|_2^2\\ & \geq \frac{1}{p_+}\rho_p(\nabla u)+ \frac{1}{2}\|\nabla u\|_2^2 + \frac{\lambda c_5}{r_+}\|u^+\|_{r_+}^{r_+}-\frac{\lambda}{2}\eta \|u^+\|_2^2 \quad \mbox{for all } u \in W^{1,p(z)}_0(\Omega).
	\end{align*}
	
Since $p_->2$, from this last inequality we infer that $\sigma^+_\lambda(\cdot)$ is coercive. Also, it is sequentially weakly lower semicontinuous. So, we can find $u^\ast_\lambda \in W^{1,p(z)}_0(\Omega)$ such that
	\begin{equation}
	\label{eq27} \sigma^+_\lambda(u^\ast_\lambda)=\min \left[\sigma^+_\lambda(u) \, :\, u \in W^{1,p(z)}_0(\Omega)\right].
	\end{equation}
	
Let $t \in (0,1)$. We have
$$
\sigma_\lambda^+(t \widehat{u}_1(2))  \leq \frac{t^{p_-}}{p_-}\rho_p(\nabla  \widehat{u}_1(2))+\frac{t^2}{2}\left[\int_\Omega \widehat{\lambda}_1(2)-\lambda \eta\right]\widehat{u}_1(2)^2dz + \frac{\lambda t^{r_+}}{r_+}\|\widehat{u}_1(2) \|^{r_+}_{r_+}.$$

From the choice of $\eta$ we see that
$$\beta_0=\int_\Omega (\lambda \eta- \widehat{\lambda}_1(2))\widehat{u}_1(2)^2dz>0.$$

Therefore we can write that
$$ \sigma_\lambda^+(t \widehat{u}_1(2)) \leq  c_6 t^{p_-}- c_7 t^{2}\quad \mbox{for some $c_6,c_7>0$ (recall that $p_-<r_+$).}$$

Since $2<p_-$, taking $t \in (0,1)$ even smaller if necessary, we have
 \begin{align*}
	& \sigma^+_\lambda(t \widehat{u}_1(2))<0,\\ \Rightarrow \quad &  \sigma^+_\lambda(u^\ast_\lambda)<0= \sigma^+_\lambda(0) \quad \mbox{(see \eqref{eq27}),}\\ \Rightarrow \quad & u^\ast_\lambda \neq 0.
\end{align*}

From \eqref{eq27} we have 
\begin{align}
	\nonumber & (\sigma^+_\lambda)^\prime (u^\ast_\lambda)=0,\\ 
	\Rightarrow \quad \label{eq28} & \langle A_{p(z)}(u^\ast_\lambda),h\rangle +\langle A_{2}(u^\ast_\lambda),h\rangle =  \lambda \int_\Omega \left[\eta (u^\ast_\lambda)^+-c_5((u^\ast_\lambda)^+)^{r_+-1} \right]h dz \\ \nonumber & \hskip 8cm \mbox{for all $h \in W_0^{1,p(z)}(\Omega)$.}
\end{align}

In \eqref{eq28} we choose $h=-(u^\ast_\lambda)^- \in W_0^{1,p(z)}(\Omega)$ and obtain
\begin{align*}
	& \rho_p(\nabla(u^\ast_\lambda)^-)+\|\nabla (u^\ast_\lambda)^-\|_2^2=0, \\ \Rightarrow \quad & u^\ast_\lambda \geq 0, \, u^\ast_\lambda \neq 0.
\end{align*}

Then from \eqref{eq27} we see that $u^\ast_\lambda$ is a positive solution of \eqref{eq0bis}. As before (see the proof of Proposition \ref{prop3}), the anisotropic regularity theory and the anisotropic maximum principle, imply that 
$$ u^\ast_\lambda \in {\rm int \, }C_+.$$

Next we show the uniqueness of this positive solution. To this end we consider the integral functional $j: L^1(\Omega)\to \overline{\mathbb{R}}=\mathbb{R} \cup \{+\infty\}$ defined by
$$j(u)= \begin{cases} \int_\Omega \frac{1}{p(z)}|\nabla u^{1/2}|^{p(z)}dz+\frac{1}{2}\|\nabla u^{1/2}\|_2^2 & \mbox{if }u \geq 0, \, u^{1/2} \in W_0^{1,p(z)}(\Omega),\\ +\infty & \mbox{otherwise.} 
\end{cases}
$$

From Theorem 2.2 Tak\'a\u{c}-Giacomoni \cite{Ref20}, we know that $j(\cdot)$ is convex. Let ${\rm dom \,}j=\{u \in L^1(\Omega) \, : \, j(u)< \infty \}$ (the effective domain of $j(\cdot)$) and suppose $\widehat{u}^\ast_\lambda$ is another positive solution of \eqref{eq0bis}. Again we have that $\widehat{u}^\ast_\lambda \in {\rm int \, }C_+$. Hence using Proposition 4.1.22, p. 274, of Papageorgiou-R\u{a}dulescu-Repov\v{s} \cite{Ref13}, we have 
$$ \frac{\widehat{u}^\ast_\lambda}{u^\ast_\lambda}\in L^\infty(\Omega) \mbox{ and } \frac{u^\ast_\lambda}{\widehat{u}^\ast_\lambda}\in L^\infty(\Omega).$$ 

Let $h= (u^\ast_\lambda)^2-(\widehat{u}^\ast_\lambda)^2$. Then for $|t|<1$ small, we have
$$ (u^\ast_\lambda)^2 + th \in {\rm dom \,}j, \quad (\widehat{u}^\ast_\lambda)^2 + th \in {\rm dom \,}j.$$

Thus the convexity of $j(\cdot)$ implies the Gateaux differentiability of $j(\cdot)$ at  $(u^\ast_\lambda)^2$ and at $(\widehat{u}^\ast_\lambda)^2$ in the direction $h$. Moreover, a direct calculation using Green's identity (see also \cite{Ref20}, Theorem 2.5), gives
\begin{align*}
	j^\prime((u^\ast_\lambda)^2)(h) & = \frac{1}{2}\int_\Omega \frac{-\Delta_{p(z)} u^\ast_\lambda -\Delta u^\ast_\lambda }{u^\ast_\lambda}h dz\\
			&  = \frac{\lambda}{2}\int_\Omega [\eta -c_5 (u^\ast_\lambda)^{r_+-2}]h dz,\\
	j^\prime((\widehat{u}^\ast_\lambda)^2)(h) & = \frac{1}{2}\int_\Omega \frac{-\Delta_{p(z)} \widehat{u}^\ast_\lambda -\Delta \widehat{u}^\ast_\lambda }{\widehat{u}^\ast_\lambda}h dz\\
		& = \frac{\lambda}{2}\int_\Omega [\eta -c_5 (\widehat{u}^\ast_\lambda)^{r_+-2}] hdz.
	\end{align*}

The convexity of $j(\cdot)$ implies the monotonicity of $j^\prime(\cdot)$. Hence 
\begin{align*}
& 0 \leq c_5 \int_\Omega  \left[(\widehat{u}^\ast_\lambda)^{r_+-2}-(u^\ast_\lambda)^{r_+-2}\right]	  ((u^\ast_\lambda)^2-(\widehat{u}^\ast_\lambda)^2)dz \leq 0,\\ \Rightarrow \quad & u_\lambda^\ast =\widehat{u}_\lambda^\ast.
\end{align*}	

This proves the uniqueness of the positive solution $u_\lambda^\ast \in {\rm int \,}C_+$ of \eqref{eq0bis}. Since the equation is odd, it follows that $v_\lambda^\ast=-u_\lambda^\ast \in - {\rm int \, }C_+$ is the unique negative solution of \eqref{eq0bis}, $\lambda>0$. 
	\end{proof}

The solution $u_\lambda^\ast$ (resp. $v_\lambda^\ast$), will provide a lower bound (resp. an upper bound) for the solution set $S_\lambda^+$ (resp. $S_\lambda^-$). These bounds are important in generating the extremal constant sign solutions. 

\begin{proposition}
	\label{prop6} If hypotheses $H_0$, $H_1$ hold, then $u_\lambda^\ast \leq u$ for all $u \in S_\lambda^+$ and $v \leq v_\lambda^\ast$ for all $v \in S_\lambda^-$.
\end{proposition}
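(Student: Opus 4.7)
The plan is to prove $u_\lambda^\ast \leq u$ for any $u \in S_\lambda^+$ by constructing, via truncation and direct minimization, a positive solution of the auxiliary problem \eqref{eq0bis} that lies inside the order interval $[0,u]$, and then invoking the uniqueness part of Proposition \ref{prop5}; the upper bound for $S_\lambda^-$ will follow from the odd symmetry of \eqref{eq0bis}.

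Fix $\lambda\in(0,\lambda_+)$ and $u\in S_\lambda^+\subseteq{\rm int\,}C_+$. I truncate the reaction of \eqref{eq0bis} by $u$ from above and by $0$ from below:
\[
e_+(z,x)=\begin{cases} 0 & \text{if } x\leq 0,\\ \eta x - c_5 x^{r_+-1} & \text{if } 0<x\leq u(z),\\ \eta u(z)- c_5 u(z)^{r_+-1} & \text{if } u(z)<x,\end{cases}
\]
set $E_+(z,x)=\int_0^x e_+(z,s)\,ds$, and form
\[
\widehat{\sigma}_\lambda^+(w)=\int_\Omega\frac{1}{p(z)}|\nabla w|^{p(z)}dz+\frac{1}{2}\|\nabla w\|_2^2-\lambda\int_\Omega E_+(z,w)\,dz.
\]
Since $e_+(z,\cdot)$ is uniformly bounded in terms of $u(z)$, the functional $\widehat{\sigma}_\lambda^+$ is coercive (the $p(z)$-term, with $p_->2$, dominates the at-worst-linear growth of $E_+$) and sequentially weakly lower semicontinuous, so it attains a global minimum at some $\widetilde{u}\in W_0^{1,p(z)}(\Omega)$. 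The computation used in the proof of Proposition \ref{prop5}, applied to $t\widehat{u}_1(2)$ for $t>0$ chosen small enough that $t\widehat{u}_1(2)\leq u$ (possible because $u\in{\rm int\,}C_+$; see Papageorgiou-R\u{a}dulescu-Repov\v{s} \cite{Ref13}, Proposition 4.1.22), gives $\widehat{\sigma}_\lambda^+(t\widehat{u}_1(2))<0$, hence $\widetilde{u}\neq 0$.

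From $(\widehat{\sigma}_\lambda^+)'(\widetilde{u})=0$, testing with $-\widetilde{u}^-$ yields $\widetilde{u}\geq 0$, while testing with $h=(\widetilde{u}-u)^+$ gives
\begin{align*}
\langle A_{p(z)}(\widetilde{u})-A_{p(z)}(u),h\rangle &+ \langle A_{2}(\widetilde{u})-A_{2}(u),h\rangle \\ &= \lambda\int_\Omega\bigl[\eta u - c_5 u^{r_+-1} - f(z,u)\bigr]h\,dz \;\leq\; 0,
\end{align*}
where the sign on the right uses the pointwise form of \eqref{eq26}, namely $f(z,u)\geq\eta u-c_5 u^{r_+-1}$ on $\{u>0\}$ (and $u>0$ on the support of $h$ since $u\in{\rm int\,}C_+$). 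The strict monotonicity of $A_{p(z)}$ and $A_2$ (Proposition \ref{prop2}) then forces $h=0$, i.e.\ $\widetilde{u}\leq u$. Thus $\widetilde{u}\in[0,u]\setminus\{0\}$, and by construction $e_+(z,\widetilde{u})=\eta\widetilde{u}-c_5\widetilde{u}^{r_+-1}$, so $\widetilde{u}$ is a positive solution of \eqref{eq0bis}. The uniqueness assertion of Proposition \ref{prop5} identifies $\widetilde{u}=u_\lambda^\ast$, whence $u_\lambda^\ast\leq u$.

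For the negative counterpart, the same procedure with the truncation interval $[v(z),0]$ for $v\in S_\lambda^-$ and the pointwise inequality $f(z,v)\leq\eta v-c_5|v|^{r_+-2}v$ for $v<0$ (again from \eqref{eq26}) produces a negative solution of \eqref{eq0bis} lying in $[v,0]$, which by uniqueness must equal $v_\lambda^\ast=-u_\lambda^\ast$, so $v\leq v_\lambda^\ast$. The only non-routine step of the whole argument is the comparison $\widetilde{u}\leq u$: it is driven by the pointwise bound \eqref{eq26} together with the strict monotonicity of $A_{p(z)}$ and $A_2$, while everything else is a standard direct-method minimization of a suitably truncated functional.
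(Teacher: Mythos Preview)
Your proof is correct and follows essentially the same approach as the paper: truncate the reaction of \eqref{eq0bis} at $0$ and $u$, minimize the resulting coercive functional, use test functions $-\widetilde{u}^-$ and $(\widetilde{u}-u)^+$ together with \eqref{eq26} to localize the minimizer in $[0,u]$, and conclude by the uniqueness in Proposition~\ref{prop5}. The only cosmetic difference is that you subtract the two equations and invoke strict monotonicity of $A_{p(z)}+A_2$ directly, whereas the paper writes the comparison as a chain of inequalities before concluding $\widetilde{u}\leq u$.
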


\begin{proof}
	 Let $u \in S_\lambda^+ \subseteq {\rm int \, }C_+$ and consider the Carath\'eodory function $e: \Omega \times \mathbb{R} \to \mathbb{R}$ defined by
	\begin{equation}
	\label{eq29} e(z,x)=\begin{cases} \eta x^+-c_5(x^+)^{r_+-1} & \mbox{if }x \leq u(z),\\
	\eta u(z)-c_5u(z)^{r_+-1} & \mbox{if }u(z)<x.
	\end{cases}
	\end{equation}
	
	We set $E(z,x)=\int_0^xe(z,s)ds$ and consider the $C^1$-functional $\widehat{\sigma}^+_\lambda : W_0^{1,p(z)}(\Omega) \to \mathbb{R}$ defined by
\begin{align*}
	\widehat{\sigma}^+_\lambda(u) & = \int_\Omega \frac{1}{p(z)}|\nabla u(z)|^{p(z)}dz + \frac{1}{2}\|\nabla u\|_2^2 - \lambda \int_\Omega E(z,u)dz\\ & \geq \frac{1}{p_+}\rho_p(\nabla u)+\frac{1}{2}\|\nabla u\|_2^2 -c_8 \quad \mbox{for some $c_8>0$ (see \eqref{eq29}), all } u \in W^{1,p(z)}_0(\Omega).
\end{align*}

It follows that $\widehat{\sigma}^+_\lambda(\cdot)$ is coercive. Also, it is sequentially weakly lower semicontinuous. So, we can find $\widetilde{u}^\ast_\lambda \in W^{1,p(z)}_0(\Omega)$ such that
\begin{equation}
\label{eq30} \widehat{\sigma}^+_\lambda(\widetilde{u}^\ast_\lambda)=\min \left[\widehat{\sigma}^+_\lambda(u) \, :\, u \in W^{1,p(z)}_0(\Omega)\right].
\end{equation}

As before (see the proof of Proposition \ref{prop5}), for  $t \in (0,1)$ small, we will have
\begin{align*}
& 0 \leq t \widehat{u}_1(2)\leq u \mbox{ and }\widehat{\sigma}_\lambda^+(t \widehat{u}_1(2)) <0\\ &\mbox{(recall that $u \in {\rm int \, }C_+$, see \eqref{eq29} and recall $2< p_-<r_+$)}	\\ \Rightarrow \quad &  \widehat{\sigma}^+_\lambda(\widetilde{u}^\ast_\lambda)<0= \widehat{\sigma}^+_\lambda(0) \quad \mbox{(see \eqref{eq30}),}\\ \Rightarrow \quad & \widetilde{u}^\ast_\lambda \neq 0.
\end{align*}

From \eqref{eq30} we have 
\begin{align}
	\nonumber & (\widehat{\sigma}^+_\lambda)^\prime (\widetilde{u}^\ast_\lambda)=0,\\ 
	\Rightarrow \quad \label{eq31} & \langle A_{p(z)}(\widetilde{u}^\ast_\lambda),h\rangle +\langle A_{2}(\widetilde{u}^\ast_\lambda),h\rangle =  \lambda \int_\Omega e(z,\widetilde{u}^\ast_\lambda)h dz \quad \mbox{for all $h \in W_0^{1,p(z)}(\Omega)$.}
\end{align}

In \eqref{eq31} first we choose $h=-(\widetilde{u}^\ast_\lambda)^- \in W_0^{1,p(z)}(\Omega)$. We obtain
\begin{align*}
	& \rho_p(\nabla(\widetilde{u}^\ast_\lambda)^-)+\|\nabla (\widetilde{u}^\ast_\lambda)^-\|_2^2=0 \quad \mbox{(see \eqref{eq29})}, \\ \Rightarrow \quad & \widetilde{u}^\ast_\lambda \geq 0, \, \widetilde{u}^\ast_\lambda \neq 0.
\end{align*}

Next in \eqref{eq31} we choose $h=(\widetilde{u}^\ast_\lambda-u)^+ \in W_0^{1,p(z)}(\Omega)$. We have
\begin{align*}
	&A_{p(z)}(\widetilde{u}^\ast_\lambda),(\widetilde{u}^\ast_\lambda-u)^+\rangle +\langle A_{2}(\widetilde{u}^\ast_\lambda),(\widetilde{u}^\ast_\lambda-u)^+\rangle\\ & = \lambda \int_\Omega \left[\eta u-c_5u^{r_+-1} \right](\widetilde{u}^\ast_\lambda-u)^+ dz\quad \mbox{(see \eqref{eq29})} \\ & \leq  \lambda \int_\Omega f(z,u)(\widetilde{u}^\ast_\lambda-u)^+ dz\quad \mbox{(see \eqref{eq26})} \\	& = A_{p(z)}(u),(\widetilde{u}^\ast_\lambda-u)^+\rangle +\langle A_{2}(u),(\widetilde{u}^\ast_\lambda-u)^+\rangle, \\ \Rightarrow \quad & \widetilde{u}^\ast_\lambda \leq u.
\end{align*}

So, we have proved that
\begin{equation}
\label{eq32}\widetilde{u}^\ast_\lambda \in [0,u], \quad \widetilde{u}^\ast_\lambda \neq 0.
\end{equation}

From \eqref{eq32}, \eqref{eq29} and \eqref{eq31}, we see that $\widetilde{u}^\ast_\lambda$ is a positive solution of \eqref{eq0bis}. Then Proposition \ref{prop5} implies that 
\begin{align*}
	& \widetilde{u}^\ast_\lambda=u^\ast_\lambda,\\ \Rightarrow \quad &
	u^\ast_\lambda \leq u \quad \mbox{for all $u \in S_\lambda^+$.}
\end{align*}

Similarly we show that $v \leq 	\widetilde{v}^\ast_\lambda$ for all $v \in S_\lambda^-$.
	\end{proof}

Now we are ready to produce the extremal constant sign solutions for problem \eqref{eq0}. Let $\lambda^\ast = \min \{\lambda_+,\lambda_-\}$.

\begin{proposition}
	\label{prop7} If hypotheses $H_0$, $H_1$ hold and $\lambda \in (0,\lambda^\ast)$, then problem \eqref{eq0} has a smallest positive solution $\overline{u}_\lambda \in S_\lambda^+ \subseteq {\rm int \, }C_+$ and a biggest negative solution  $\overline{v}_\lambda  \in S_\lambda^- \subseteq - {\rm int \, }C_+$.
\end{proposition}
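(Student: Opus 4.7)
The plan is to realize $\overline{u}_\lambda$ as the pointwise limit of a decreasing sequence from $S_\lambda^+$ that converges to $\inf S_\lambda^+$, and then to check that this limit is itself a nontrivial element of $S_\lambda^+$. The lower bound $u_\lambda^\ast$ obtained in Proposition \ref{prop6} is exactly what prevents the limit from collapsing to zero, while the upper bound coming from any fixed member of $S_\lambda^+$ supplies uniform $L^\infty$ and Sobolev-space control. The negative case is symmetric, using $v_\lambda^\ast$.

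The first main task is to show that $S_\lambda^+$ is downward directed: given $u_1,u_2\in S_\lambda^+$, I would produce some $u\in S_\lambda^+$ with $u\le u_1$ and $u\le u_2$. Setting $m=\min\{u_1,u_2\}\in\mathrm{int}\,C_+$, I would truncate the reaction using
\[
\widehat{f}(z,x)=\begin{cases} f(z,x^+) & \text{if } x\le m(z),\\ f(z,m(z)) & \text{if } x> m(z),\end{cases}
\]
and minimize the corresponding $C^1$-functional $\widehat{\varphi}_\lambda$ on $W_0^{1,p(z)}(\Omega)$. Coercivity and sequential weak lower semicontinuity follow as in the proof of Proposition \ref{prop3}, hypothesis $H_1(iv)$ (via $t\widehat{u}_1(2)$ with $t$ small so that $t\widehat{u}_1(2)\le m$) gives a negative infimum and hence a nontrivial minimizer $u$. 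Testing against $-u^-$ forces $u\ge 0$, testing against $(u-u_i)^+$ combined with the comparison used in \eqref{eq16} and the fact that $u_1,u_2\in S_\lambda^+$ solve the untruncated equation forces $u\le m$, so $u\in S_\lambda^+$ with $u\le u_1,u_2$.

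Once downward directedness is established, Lemma 3.10 of Hu--Papageorgiou (reproduced as Proposition 2.2.12 in \cite{Ref13}) provides a decreasing sequence $\{u_n\}_{n\in\mathbb{N}}\subseteq S_\lambda^+$ such that $\inf_n u_n=\inf S_\lambda^+$ pointwise a.e. Because $u_n\le u_1$, hypothesis $H_1(i)$ gives a uniform $L^\infty$ bound on $f(\cdot,u_n(\cdot))$, and testing the weak formulation with $u_n$ produces a uniform bound in $W_0^{1,p(z)}(\Omega)$ via Proposition \ref{prop1}. Passing to a subsequence, $u_n\xrightarrow{w} \overline{u}_\lambda$ in $W_0^{1,p(z)}(\Omega)$ and $u_n\to \overline{u}_\lambda$ in $L^{r(z)}(\Omega)$ by the anisotropic Sobolev embedding. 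Testing the equation for $u_n$ against $u_n-\overline{u}_\lambda$ and invoking the $(S)_+$ property from Proposition \ref{prop2} (together with the analogous property for $A_2$) yields strong convergence $u_n\to\overline{u}_\lambda$ in $W_0^{1,p(z)}(\Omega)$, so $\overline{u}_\lambda$ satisfies \eqref{eq0}. Proposition \ref{prop6} gives $u_\lambda^\ast\le u_n$ for every $n$, hence $u_\lambda^\ast\le\overline{u}_\lambda$, which guarantees $\overline{u}_\lambda\ne 0$ and therefore $\overline{u}_\lambda\in S_\lambda^+\subseteq\mathrm{int}\,C_+$. By construction $\overline{u}_\lambda=\inf S_\lambda^+$.

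The delicate point I expect to be the main obstacle is the downward directedness step: verifying that the minimizer of the truncated functional is both nontrivial \emph{and} bounded above by $m$, while still solving the original (untruncated) equation on its support. The nontriviality hinges on the concavity near zero encoded in $H_1(iv)$ combined with $2<p_-$, and the upper bound requires the monotonicity-type comparison used earlier in the paper. The analogous statement for $\overline{v}_\lambda\in -\mathrm{int}\,C_+$ follows by repeating the argument with $\varphi_\lambda^-$, $v_\lambda^\ast$, and the upward directedness of $S_\lambda^-$.
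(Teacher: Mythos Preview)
Your argument is essentially the paper's: downward directedness of $S_\lambda^+$, extraction of a decreasing sequence via Lemma~3.10 of Hu--Papageorgiou, uniform Sobolev bounds from testing with $u_n$, strong convergence through the $(S)_+$ property of $A_{p(z)}$, and nontriviality of the limit from the lower barrier $u_\lambda^\ast$; the only difference is that the paper outsources the directedness step to \cite{Ref11} rather than sketching it. One caution in your directedness sketch: testing the truncated Euler equation against $(u-u_i)^+$ does not immediately yield $u\le u_i$, because on $\{u>u_i\}$ the truncated reaction equals $f(z,m(z))$ with $m=\min\{u_1,u_2\}$ possibly strictly below $u_i$, and hypothesis $H_1(v)$ does not make $f(z,\cdot)$ itself nondecreasing---the clean fix is to use that $m$ is a weak supersolution (minimum of two solutions for an operator with monotone principal part) and test against $(u-m)^+$ instead.
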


\begin{proof}
From Papageorgiou-R\u{a}dulescu-Repov\v{s} \cite{Ref11} (see the proof of Proposition \ref{prop7}), we have that $S_\lambda^+$ is downward directed (that is, if $u_1,u_2 \in S_\lambda^+$, then we can find $u \in S_\lambda^+$ such that $u \leq u_1$, $u \leq u_2$). Therefore using Lemma 3.10, p. 178, of Hu-Papageorgiou \cite{Ref8}, we can find $\{u_n\}_{n \in \mathbb{N}}\subseteq S_\lambda^+$ decreasing such that
$$ \inf S_\lambda^+ = \inf\limits_{n \in \mathbb{N}}u_n.$$

We have
\begin{align}
\label{eq33} &  \langle A_{p(z)}(u_n),h\rangle +\langle A_{2}(u_n),h\rangle =  \lambda \int_\Omega f(z,u_n)h dz\\ & \nonumber \hskip5cm \mbox{ for all $h \in W_0^{1,p(z)}(\Omega)$, all $n \in \mathbb{N}$,}\\
\label{eq34} & u_\lambda^\ast \leq u_n \leq u_1 \quad \mbox{for all $n \in \mathbb{N}$.}
\end{align}
	
	In \eqref{eq33} we choose $h = u_n \in W_0^{1,p(z)}(\Omega)$ and obtain
	\begin{equation}
	\label{eq35}\rho_p(\nabla u_n)+\|\nabla u_n\|_2^2=\lambda \int_\Omega f(z,u_n)u_n dz  \quad \mbox{for all $n \in \mathbb{N}$.}
	\end{equation}
	
	From \eqref{eq34}, \eqref{eq35} and hypothesis $H_1\, (i)$, we infer that 
	$$\{u_n\}_{n \in \mathbb{N}}\subseteq  W_0^{1,p(z)}(\Omega) \mbox{ is bounded.}$$
	
	So, we may assume that
	\begin{equation}
	\label{eq36}u_n \xrightarrow{w} \overline{u}_\lambda \mbox{ in }W_0^{1,p(z)}(\Omega) \mbox{ and } u_n \to \overline{u}_\lambda \mbox{ in }L^{r(z)}(\Omega).
	\end{equation}
	
	In \eqref{eq33} we choose $h = u_n-\overline{u}_\lambda \in W_0^{1,p(z)}(\Omega)$, pass to the limit as $n \to \infty$ and use \eqref{eq36}. We obtain
\begin{align}
	\nonumber & \lim_{n \to \infty}\left[ \langle A_{p(z)}(u_n),u_n-\overline{u}_\lambda\rangle +\langle A_{2}(u_n),u_n-\overline{u}_\lambda\rangle \right]=0,\\
	\Rightarrow \quad  & \nonumber \limsup_{n \to \infty}\left[ \langle A_{p(z)}(u_n),u_n-\overline{u}_\lambda\rangle +\langle A_{2}(\overline{u}_\lambda),u_n-\overline{u}_\lambda\rangle \right]\leq 0,\\
\nonumber & \hskip 4cm \mbox{(from the monotonicity of $A_2(\cdot)$),}\\
\Rightarrow \quad  & \nonumber \limsup_{n \to \infty}  \langle A_{p(z)}(u_n),u_n-\overline{u}_\lambda\rangle \leq 0 \quad \mbox{(see \eqref{eq36}),}\\ \label{eq37} \Rightarrow \quad  & u_n \to \overline{u}_\lambda \quad \mbox{in $W_0^{1,p(z)}(\Omega)$ (see Proposition \ref{prop2}).}
\end{align}

Therefore, if in \eqref{eq33} we pass to the limit as $n \to \infty$ and use \eqref{eq37}, we obtain
\begin{align*}
 &  \langle A_{p(z)}(\overline{u}_\lambda),h\rangle +\langle A_{2}(\overline{u}_\lambda),h\rangle =  \lambda \int_\Omega f(z,\overline{u}_\lambda)h dz\quad \mbox{for all $h \in W_0^{1,p(z)}(\Omega)$,}\\
 & u_\lambda^\ast \leq \overline{u}_\lambda, \\ \Rightarrow  \quad & \overline{u}_\lambda \in S_\lambda^+ \mbox{ and } \overline{u}_\lambda=\inf S_\lambda^+.
\end{align*}

For the negative solutions, we know that $S_\lambda^-$ is upward directed (that is, if $v_1,v_2 \in S_\lambda^-$, then we can find $v \in S_\lambda^-$ such that $v_1 \leq v$, $v_2 \leq v$). Reasoning as above, we produce $\overline{v}_\lambda \in S_\lambda^- \subseteq - {\rm int \,}C_+$ such that  $v \leq \overline{v}_\lambda$ for all $v\in S_\lambda^-$.
		\end{proof}

\section{Nodal Solutions}\label{sec:5}

In this section using the extremal constant sign solutions and following the approach outlined in the beginning of Section \ref{sec:4}, we will produce a nodal (sign-changing) solution for problem \eqref{eq0}, $\lambda \in (0,\lambda^\ast)$.

Let $\overline{u}_\lambda \in   {\rm int \,}C_+$ and $\overline{v}_\lambda \in   - {\rm int \,}C_+$ be the two extremal constant sign solutions produced in Proposition \ref{prop7}. We introduce the Carath\'eodory function $\overline{f}: \Omega \times \mathbb{R} \to \mathbb{R}$ defined by
\begin{equation}
\label{eq38} \overline{f}(z,x)=\begin{cases} f(z,\overline{v}_\lambda(z) ) & \mbox{if }x < \overline{v}_\lambda(z),\\ f(z,x) & \mbox{if } \overline{v}_\lambda(z) \leq x \leq \overline{u}_\lambda(z),\\
f(z,\overline{u}_\lambda(z) ) & \mbox{if }\overline{u}_\lambda(z)<x.
\end{cases}
\end{equation}

We also consider the positive and negative truncations of $\overline{f}(z,\cdot)$, namely the Carath\'eodory functions
\begin{equation}
\label{eq39}\overline{f}_\pm(z,x)=\overline{f}(z,\pm x^\pm).
\end{equation}

We set $\overline{F}(z,x)=\int_0^x\overline{f}(z,s)ds$ and $\overline{F}_\pm(z,x)=\int_0^x\overline{f}_\pm(z,s)ds$, and then introduce the $C^1$-functionals $\overline{\varphi}_\lambda,\overline{\varphi}^\pm_\lambda : W_0^{1,p(z)}(\Omega) \to \mathbb{R}$ defined by
\begin{align*}
 &	\overline{\varphi}_\lambda(u) = \int_\Omega \frac{1}{p(z)}|\nabla u(z)|^{p(z)}dz + \frac{1}{2}\|\nabla u\|_2^2 - \lambda \int_\Omega \overline{F}(z,u)dz,\\ & \overline{\varphi}^\pm_\lambda(u) = \int_\Omega \frac{1}{p(z)}|\nabla u(z)|^{p(z)}dz + \frac{1}{2}\|\nabla u\|_2^2 - \lambda \int_\Omega \overline{F}_\pm(z,u)dz, \\ & \hskip 7cm \mbox{for all } u \in W^{1,p(z)}_0(\Omega).
\end{align*}

Using \eqref{eq38} and \eqref{eq39} and arguing as in the proof of Proposition \ref{prop6}, since $\overline{u}_\lambda,\overline{v}_\lambda$ are the extremal constant sign solutions, we obtain the following proposition. 

\begin{proposition}
	\label{prop8} If hypotheses $H_0$, $H_1$ hold and $\lambda \in (0,\lambda^\ast)$, then $K_{\overline{\varphi}_\lambda}\subseteq [\overline{v}_\lambda,\overline{u}_\lambda] \cap C_0^1(\overline{\Omega})$, $K_{\overline{\varphi}^+_\lambda}=\{0,\overline{u}_\lambda\}$, $K_{\overline{\varphi}^-_\lambda}=\{0,\overline{v}_\lambda\}$. 
\end{proposition}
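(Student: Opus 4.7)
The plan is to exploit the truncation structure of $\overline{f}$ combined with the extremality of $\overline{u}_\lambda,\overline{v}_\lambda$ provided by Proposition \ref{prop7}, arguing along the same lines as in Proposition \ref{prop6}.

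For the first inclusion, I would take $u \in K_{\overline{\varphi}_\lambda}$, so that
$$\langle A_{p(z)}(u),h\rangle + \langle A_2(u),h\rangle = \lambda \int_\Omega \overline{f}(z,u)\, h\, dz \quad \text{for all } h \in W_0^{1,p(z)}(\Omega),$$
and test with $h=(u-\overline{u}_\lambda)^+\in W_0^{1,p(z)}(\Omega)$. On $\{u>\overline{u}_\lambda\}$, definition \eqref{eq38} gives $\overline{f}(z,u)=f(z,\overline{u}_\lambda)$; subtracting the equation satisfied by $\overline{u}_\lambda \in S_\lambda^+$ then yields
$$\langle A_{p(z)}(u)-A_{p(z)}(\overline{u}_\lambda),(u-\overline{u}_\lambda)^+\rangle + \langle A_2(u)-A_2(\overline{u}_\lambda),(u-\overline{u}_\lambda)^+\rangle = 0.$$
Strict monotonicity of $A_{p(z)}$ and $A_2$ forces $(u-\overline{u}_\lambda)^+=0$, i.e.\ $u\leq \overline{u}_\lambda$. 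The symmetric test $h=-(\overline{v}_\lambda-u)^+$ analogously produces $\overline{v}_\lambda\leq u$. On the interval $[\overline{v}_\lambda,\overline{u}_\lambda]$ we have $\overline{f}(z,u)=f(z,u)$, so $u$ solves \eqref{eq0}, and the anisotropic regularity theory cited in the proof of Proposition \ref{prop3} (Fan-Zhao, Tan-Fang) places $u \in C_0^1(\overline{\Omega})$.

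For $K_{\overline{\varphi}^+_\lambda}$, let $u$ be critical. Testing with $-u^-$ and using \eqref{eq39} together with $\overline{f}(z,0)=f(z,0)=0$ (valid since $\overline{v}_\lambda\leq 0\leq \overline{u}_\lambda$) gives $\rho_p(\nabla u^-)+\|\nabla u^-\|_2^2=0$, hence $u\geq 0$. Since $\overline{f}_+(z,u)=\overline{f}(z,u)$ when $u\geq 0$, the same comparison as before (now just with the upper obstacle $\overline{u}_\lambda$) yields $u\leq \overline{u}_\lambda$, so $u \in [0,\overline{u}_\lambda]$. On this interval $\overline{f}_+(z,u)=f(z,u)$, making $u$ a nonnegative solution of \eqref{eq0}. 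If $u\neq 0$, then $u\in S_\lambda^+$, and the extremality part of Proposition \ref{prop7} gives $\overline{u}_\lambda\leq u$, forcing $u=\overline{u}_\lambda$. Thus $K_{\overline{\varphi}^+_\lambda}=\{0,\overline{u}_\lambda\}$. The assertion $K_{\overline{\varphi}^-_\lambda}=\{0,\overline{v}_\lambda\}$ is the mirror image: test with $u^+$ to obtain $u\leq 0$, use $\overline{v}_\lambda$ as the lower obstacle, and invoke the fact that $\overline{v}_\lambda$ is the largest element of $S_\lambda^-$.

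No serious obstacle is anticipated. The only delicate point is ensuring that the truncated nonlinearity matches $f(z,\overline{u}_\lambda)$ or $f(z,\overline{v}_\lambda)$ on the relevant super/sublevel sets so that the monotonicity of $A_{p(z)}+A_2$ can be applied cleanly; this is a direct consequence of the case splits in \eqref{eq38}--\eqref{eq39}. All regularity and maximum-principle ingredients have already been invoked in Proposition \ref{prop3}.
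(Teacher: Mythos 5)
Your argument is correct and matches the approach the paper intends: the paper's ``proof'' of Proposition~\ref{prop8} is a one-line remark saying to use \eqref{eq38}--\eqref{eq39} and argue as in Proposition~\ref{prop6}, which is exactly the comparison-by-testing with $(u-\overline{u}_\lambda)^+$, $(\overline{v}_\lambda-u)^+$, $\pm u^\mp$ that you have carried out, combined with the extremality of $\overline{u}_\lambda$, $\overline{v}_\lambda$ from Proposition~\ref{prop7} and the regularity theory already invoked in Proposition~\ref{prop3}.
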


The next result will allow the use of the mountain pass theorem.

\begin{proposition}
	\label{prop9} If hypotheses $H_0$, $H_1$ hold and $\lambda \in (0,\lambda^\ast)$, then the two extremal constant sign solutions $\overline{u}_\lambda \in {\rm int \,}C_+$ and $\overline{v}_\lambda \in- {\rm int \,}C_+$ are local minimizers of $\overline{\varphi}_\lambda(\cdot)$. 
\end{proposition}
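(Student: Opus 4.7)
The plan is to show that $\overline{u}_\lambda$ is the unique nontrivial global minimizer of $\overline{\varphi}_\lambda^+$, observe that $\overline{\varphi}_\lambda$ and $\overline{\varphi}_\lambda^+$ coincide on a small $C_0^1$-neighborhood of $\overline{u}_\lambda$, and then upgrade this to a local $W_0^{1,p(z)}(\Omega)$-minimizer via the regularity-based bootstrap already used in \eqref{eq19}. The symmetric argument with $\overline{\varphi}_\lambda^-$ handles $\overline{v}_\lambda$.

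First, I would check that $\overline{\varphi}_\lambda^+(\cdot)$ is coercive and sequentially weakly lower semicontinuous. Coercivity follows because the truncation \eqref{eq38}--\eqref{eq39} makes $\overline{f}_+(z,\cdot)$ bounded by $\max\{|f(z,x)| : |x| \leq \|\overline{u}_\lambda\|_\infty\}$ (itself in $L^\infty(\Omega)$ by $H_1(i)$), so $\overline{F}_+$ grows at most linearly while the $\frac{1}{p_+}\rho_p(\nabla u)$ term dominates. By the Weierstrass-Tonelli theorem we get a global minimizer $\widetilde{u} \in W_0^{1,p(z)}(\Omega)$.

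Next, I would show $\widetilde{u} \neq 0$ by mimicking the computation around \eqref{eq5}--\eqref{eq7}. Fix $\eta > \widehat{\lambda}_1(2)/\lambda$; by hypothesis $H_1(iv)$ there is $\delta > 0$ with $F(z,x) \geq \frac{\eta}{2} x^2$ for $|x| \leq \delta$. Since $\overline{u}_\lambda \in {\rm int\,}C_+$, we may select $t \in (0,1)$ so small that $0 \leq t\widehat{u}_1(2) \leq \min\{\delta,\overline{u}_\lambda\}$ pointwise; then $\overline{F}_+(z,t\widehat{u}_1(2)) = F(z,t\widehat{u}_1(2))$, and
\begin{equation*}
\overline{\varphi}_\lambda^+(t\widehat{u}_1(2)) \leq \frac{t^{p_-}}{p_-}\rho_p(\nabla \widehat{u}_1(2)) + \frac{t^2}{2}\bigl[\widehat{\lambda}_1(2) - \lambda\eta\bigr] \leq c_6 t^{p_-} - c_7 t^2 < 0
\end{equation*}
for $t$ small, since $p_- > 2$. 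Hence $\overline{\varphi}_\lambda^+(\widetilde{u}) < 0 = \overline{\varphi}_\lambda^+(0)$, so $\widetilde{u} \neq 0$. Since $\widetilde{u} \in K_{\overline{\varphi}_\lambda^+} = \{0,\overline{u}_\lambda\}$ by Proposition \ref{prop8}, we conclude $\widetilde{u} = \overline{u}_\lambda$, i.e., $\overline{u}_\lambda$ is the global minimizer of $\overline{\varphi}_\lambda^+$.

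Now comes the key point. Because $\overline{u}_\lambda \in {\rm int\,}C_+$, there exists a $C_0^1(\overline{\Omega})$-neighborhood $U$ of $\overline{u}_\lambda$ such that every $u \in U$ satisfies $u(z) \geq 0$ on $\overline{\Omega}$. For such $u$ we have $u(z) \geq 0 > \overline{v}_\lambda(z)$, so comparing \eqref{eq38} with the positive truncation \eqref{eq39} gives $\overline{F}_+(z,u(z)) = \overline{F}(z,u(z))$ pointwise, hence $\overline{\varphi}_\lambda(u) = \overline{\varphi}_\lambda^+(u)$ for all $u \in U$. Since $\overline{u}_\lambda$ globally minimizes $\overline{\varphi}_\lambda^+$, it is therefore a local $C_0^1(\overline{\Omega})$-minimizer of $\overline{\varphi}_\lambda$. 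Invoking Tan-Fang \cite{Ref21}, Theorem 3.2, together with Gasi\'nski-Papageorgiou \cite{Ref7}, Proposition 3.3 (exactly as in the step leading to \eqref{eq19}), we upgrade this to a local $W_0^{1,p(z)}(\Omega)$-minimizer of $\overline{\varphi}_\lambda$. The argument for $\overline{v}_\lambda \in -{\rm int\,}C_+$ is entirely symmetric, starting from the global minimizer of $\overline{\varphi}_\lambda^-$ and using $K_{\overline{\varphi}_\lambda^-} = \{0,\overline{v}_\lambda\}$.

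The main (mild) obstacle is the coincidence of $\overline{\varphi}_\lambda$ and $\overline{\varphi}_\lambda^+$ on a $C_0^1$-neighborhood; this is purely a consequence of the interior-cone property of $\overline{u}_\lambda$, and the rest of the argument is standard variational bookkeeping combined with Proposition \ref{prop8}.
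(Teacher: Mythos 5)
Your proposal is correct and follows essentially the same route as the paper: show the global minimizer of the coercive, weakly l.s.c. functional $\overline{\varphi}_\lambda^+$ is nontrivial, identify it with $\overline{u}_\lambda$ via Proposition~\ref{prop8}, use the coincidence $\overline{\varphi}_\lambda\big|_{C_+}=\overline{\varphi}_\lambda^+\big|_{C_+}$ together with $\overline{u}_\lambda\in{\rm int\,}C_+$ to obtain a local $C_0^1$-minimizer, and upgrade to a Sobolev local minimizer by Tan-Fang and Gasi\'nski-Papageorgiou. (Minor remark: the comparison $u(z)\geq 0>\overline{v}_\lambda(z)$ is not actually needed — the equality $\overline{F}_+(z,x)=\overline{F}(z,x)$ for $x\geq 0$ follows directly from the definition of the positive truncation.)
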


\begin{proof}
  From \eqref{eq38}, \eqref{eq39}  and hypothesis $H_1\, (i)$, we have
 $$\int_\Omega \overline{F}(z,u)dz \leq \widetilde{c} \quad \mbox{for some $\widetilde{c}>0$, all $u \in W^{1,p(z)}_0(\Omega)$.}$$
 
 Therefore
 \begin{align*}
 	& \overline{\varphi}^+_\lambda(u)\geq \frac{1}{p_+}\rho_p(\nabla u)-\lambda \widetilde{c} \quad \mbox{for all $u \in W^{1,p(z)}_0(\Omega)$,}\\
 	\Rightarrow \quad & \overline{\varphi}^+_\lambda(\cdot) \mbox{ is coercive (see Proposition \ref{prop1}).}
 	\end{align*}
 
Also, it is sequentially weakly lower semicontinuous. So, we can find $\overline{w}_\lambda \in W^{1,p(z)}_0(\Omega)$ such that
\begin{align*}
& \overline{\varphi}^+_\lambda(\overline{w}_\lambda)=\min \left[\overline{\varphi}^+_\lambda(u) \, :\, u \in W^{1,p(z)}_0(\Omega)\right]<0=\overline{\varphi}^+_\lambda(0)\\ & \hskip5cm \mbox{(see the proof of Proposition \ref{prop6}),}\\ \Rightarrow \quad & \overline{w}_\lambda\neq 0.
\end{align*}

Since $\overline{w}_\lambda \in K_{\overline{\varphi}^+_\lambda} \setminus \{0\}$, from Proposition \ref{prop8} we infer that
$$\overline{w}_\lambda=\overline{u}_\lambda \in {\rm int \,}C_+.$$

But from \eqref{eq38} and \eqref{eq39} it is clear that
$$\overline{\varphi}_\lambda \Big|_{C_+}= \overline{\varphi}^+_\lambda\Big|_{C_+}.$$

It follows that 
\begin{align*}
 & \mbox{$\overline{u}_\lambda \in {\rm int \,}C_+$ is a local $C_0^1(\overline{\Omega})$-minimizer of $\overline{\varphi}_\lambda(\cdot)$,}\\  \Rightarrow \quad  & \mbox{$\overline{u}_\lambda \in {\rm int \,}C_+$ is a local $W_0^{1,p(z)}(\Omega)$-minimizer of $\overline{\varphi}_\lambda(\cdot)$ (see \cite{Ref21}, \cite{Ref7}).}
\end{align*}

Similarly using this time $\overline{\varphi}^-_\lambda(\cdot)$ we show that $\overline{v}_\lambda \in -{\rm int \,}C_+$ is a local minimizer of $\overline{\varphi}_\lambda(\cdot)$.
\end{proof}

It is clear from Proposition \ref{prop8}, that we may assume that
\begin{equation}\label{eq40}K_{\overline{\varphi}_\lambda} \mbox{ is finite.}\end{equation}

Otherwise, we already have a sequence of distinct smooth nodal solutions of \eqref{eq0} and so we are done.

Also, we may assume that 
\begin{equation}
\label{eq41}\overline{\varphi}_\lambda(\overline{v}_\lambda)\leq \overline{\varphi}_\lambda(\overline{u}_\lambda).
\end{equation}

The reasoning is similar if the opposite inequality holds.

\begin{proposition}
	\label{prop10} If hypotheses $H_0$, $H_1$ hold and $\lambda \in (0,\lambda^\ast)$, then the problem \eqref{eq0} has a nodal solution $y_0 \in  C_0^1(\overline{\Omega})$. 
\end{proposition}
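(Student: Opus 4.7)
The plan is to apply the mountain pass theorem to $\overline{\varphi}_\lambda$, exploiting Proposition \ref{prop9} which furnishes two distinct local minimizers $\overline{v}_\lambda \in -\mathrm{int}\,C_+$ and $\overline{u}_\lambda \in \mathrm{int}\,C_+$. First I would verify that $\overline{\varphi}_\lambda$ satisfies the $C$-condition: the truncation \eqref{eq38} together with hypothesis $H_1\,(i)$ makes $\overline{F}(z,\cdot)$ bounded uniformly in $z \in \Omega$, so $\overline{\varphi}_\lambda$ is coercive, and the $(S)_+$-property of $A_{p(z)}$ (Proposition \ref{prop2}) upgrades weak to strong convergence of $C$-sequences in the standard way. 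In view of \eqref{eq40}, \eqref{eq41} and the isolation of both minimizers, the mountain pass theorem for functionals with two local minima (see, e.g., Papageorgiou--R\u{a}dulescu--Repov\v{s} \cite{Ref13}) yields $y_0 \in K_{\overline{\varphi}_\lambda}$ with
$$\overline{\varphi}_\lambda(\overline{u}_\lambda) \leq \overline{\varphi}_\lambda(y_0) \quad \text{and} \quad C_1(\overline{\varphi}_\lambda, y_0) \neq 0.$$

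From Proposition \ref{prop8} I get $y_0 \in [\overline{v}_\lambda, \overline{u}_\lambda] \cap C_0^1(\overline{\Omega})$ automatically. Next, since $\overline{u}_\lambda$ and $\overline{v}_\lambda$ are isolated local minimizers, their critical groups satisfy $C_k(\overline{\varphi}_\lambda, \overline{u}_\lambda) = C_k(\overline{\varphi}_\lambda, \overline{v}_\lambda) = \delta_{k,0}\,\mathbb{Z}$; comparing with $C_1(\overline{\varphi}_\lambda, y_0) \neq 0$ forces $y_0 \notin \{\overline{u}_\lambda, \overline{v}_\lambda\}$.

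The main obstacle is to rule out $y_0 = 0$. The plan is to show that $C_k(\overline{\varphi}_\lambda, 0) = 0$ for every $k \in \mathbb{N}_0$, using the concave-type behaviour near the origin encoded in hypothesis $H_1\,(iv)$: the conditions $f(z,x)/x \to +\infty$, $f(z,x) = o(|x|^{\tau-1})$ with $\tau \in (1,2)$, and $\tau F(z,x) - f(z,x)x \geq -\varepsilon |x|^{p_+}$ for $|x|$ small, combined with $\tau < 2 < p_- \leq p_+$, allow one to track the sign of $\tfrac{d}{dt}\bigl(\overline{\varphi}_\lambda(tu)/t^\tau\bigr)$ along rays $t \mapsto tu$ and build a radial contraction of $\{\overline{\varphi}_\lambda \leq 0\} \cap B_\rho(0)$ onto $\{0\}$ for sufficiently small $\rho$. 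The standard consequence (as carried out in \cite{Ref13}) is $C_k(\overline{\varphi}_\lambda, 0) = 0$ for all $k$; in particular $C_1(\overline{\varphi}_\lambda, 0) = 0 \neq C_1(\overline{\varphi}_\lambda, y_0)$, whence $y_0 \neq 0$.

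Finally, to see that $y_0$ is nodal I would argue by contradiction using the extremality built in Proposition \ref{prop7}. If $y_0 \geq 0$, then $0 \leq y_0 \leq \overline{u}_\lambda$ gives via \eqref{eq38} that $\overline{f}(z,y_0) = f(z,y_0)$ a.e., so $y_0 \in S_\lambda^+$; but $\overline{u}_\lambda = \inf S_\lambda^+$ would force $y_0 = \overline{u}_\lambda$, contradicting the preceding step. The symmetric argument using $\overline{v}_\lambda = \sup S_\lambda^-$ rules out $y_0 \leq 0$, so $y_0$ must change sign and provides the required nodal solution in $C_0^1(\overline{\Omega})$.
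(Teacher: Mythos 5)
Your proposal is correct and takes essentially the same route as the paper. Both apply the mountain pass theorem to $\overline{\varphi}_\lambda$ (with coercivity giving the $C$-condition) anchored at the two local minimizers $\overline{u}_\lambda, \overline{v}_\lambda$ from Proposition~\ref{prop9}, obtain $C_1(\overline{\varphi}_\lambda, y_0) \neq 0$ from the mountain pass characterization, use $H_1\,(iv)$ to get $C_k(\overline{\varphi}_\lambda, 0) = 0$ for all $k$ (the paper cites Leonardi--Papageorgiou \cite{Ref9} for exactly this), and invoke the extremality of $\overline{u}_\lambda, \overline{v}_\lambda$ together with $y_0 \in [\overline{v}_\lambda, \overline{u}_\lambda]$ to conclude nodality. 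The only minor variation is in excluding $y_0 \in \{\overline{u}_\lambda, \overline{v}_\lambda\}$: you compare $C_1$ at an isolated local minimizer (which vanishes) against $C_1(\overline{\varphi}_\lambda, y_0) \neq 0$, whereas the paper reads it off directly from the strict energy inequality $\overline{\varphi}_\lambda(\overline{v}_\lambda) \leq \overline{\varphi}_\lambda(\overline{u}_\lambda) < m_\lambda \leq \overline{\varphi}_\lambda(y_0)$ already built into the mountain pass setup in \eqref{eq42} and \eqref{eq44}; both are valid, but the energy-level argument is a touch more elementary and avoids quoting the critical-group formula for local minimizers.
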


\begin{proof}
From \eqref{eq40}, \eqref{eq41} and Theorem 5.7.6, p. 449, of Papageorgiou-R\u{a}dulescu-Repov\v{s} \cite{Ref13}, we can find $\rho \in (0,1)$ small such that
	\begin{equation}
	\label{eq42}\overline{\varphi}_\lambda(\overline{v}_\lambda)\leq \overline{\varphi}_\lambda(\overline{u}_\lambda)< \inf \left[\overline{\varphi}_\lambda(u)\, : \, \|u-\overline{u}_\lambda\|=\rho\right]=m_\lambda, \quad \|\overline{v}_\lambda-\overline{u}_\lambda\|>\rho.
	\end{equation}
	
Also $\overline{\varphi}_\lambda(\cdot)$ is coercive (see \eqref{eq38}). Hence  Proposition 5.1.15, p. 369, of Papageorgiou-R\u{a}dulescu-Repov\v{s} \cite{Ref13}, implies that 
	\begin{equation}
	\label{eq43}\overline{\varphi}_\lambda(\cdot) \mbox{ satisfies the $C$-condition.}
	\end{equation}
	
	Then \eqref{eq42} and \eqref{eq43} permit the use of the mountain pass theorem. So, we can find $y_0 \in W_0^{1,p(z)}(\Omega)$ such that
	\begin{equation}
	\label{eq44}y_0 \in K_{\overline{\varphi}_\lambda}\subseteq [\overline{v}_\lambda,\overline{u}_\lambda] \cap C_0^1(\overline{\Omega}), \quad m_\lambda \leq \overline{\varphi}_\lambda(y_0).
	\end{equation}
	
	From \eqref{eq44}, \eqref{eq38} and \eqref{eq42}, we infer that
	$$y_0 \in C_0^1(\overline{\Omega}) \mbox{ is a solution of \eqref{eq0}, } y_0 \not \in \{ \overline{u}_\lambda,\overline{v}_\lambda \}.$$
	
From Theorem 6.5.8, p. 527, of Papageorgiou-R\u{a}dulescu-Repov\v{s} \cite{Ref13}, we know that 	
\begin{equation}
\label{eq45}C_1(\overline{\varphi}_\lambda,y_0) \neq 0.
\end{equation}

On the other hand hypothesis $H_1\,(iv)$ and Proposition \ref{prop6} of Leonardi-Papageorgiou \cite{Ref9} imply that
\begin{equation}
\label{eq46}C_k(\overline{\varphi}_\lambda,0) = 0 \quad \mbox{for all $k \in \mathbb{N}_0$}.
\end{equation}

Comparing \eqref{eq45} and \eqref{eq46}, we conclude that $y_0 \neq 0$ and so $y_0 \in C_0^1(\overline{\Omega}) $ is a nodal solution of \eqref{eq0}.
	\end{proof}

This also proves  Theorem~\ref{theo11}. \qed

\section*{Acknowledgements} 
The authors wish to thank a knowledgeable referee for his/her constructive remarks and criticisms.
The second author was supported by Slovenian Research Agency
grants P1-0292, N1-0114, N1-0083, N1-0064, and J1-8131.

\end{document}